\newcommand\R{\mathbb R}
\newcommand\C{\mathcal C}
\newcommand\K{\mathcal K}
\renewcommand\O{\mathcal O}
\newcommand\T{\mathcal T}
\newcommand\E{\mathcal E}
\newcommand\M{\mathcal M}
\newcommand\N{\mathcal N}
\newcommand\V{\mathcal V}
\renewcommand\H{\mathcal H}
\renewcommand\P{\mathcal P}
\renewcommand\div{\operatorname{div}}
\newcommand\eps{\operatorname{\epsilon}}
\newcommand\veps{\operatorname{\varepsilon}}
\newcommand\vtheta{\operatorname{\vartheta}}
\newcommand\x{\times}
\renewcommand\t{\tilde}
\newcommand\lbra{[\![}
\newcommand\rbra{]\!]}
\newcommand\lbrac{\,[\!\!\!\{}
\newcommand\rbrac{\}\!\!\!]\,}
\renewcommand\ll{|\kern-2pt|\kern-2pt|}
\newcommand\e{^\epsilon}
\numberwithin{equation}{section}
\theoremstyle{plain}
\newtheorem{thm}{Theorem}
\newtheorem{lem}[thm]{Lemma}
\numberwithin{thm}{section}
\theoremstyle{remark}
\newtheorem*{remark}{Remark}
\def\underput#1#2#3{
\mathchoice
{\vtop{\ialign{##\crcr\hfil$#2$\vrule width0pt height0pt depth#3\hfil\crcr
\noalign{\nointerlineskip}\hfil$\scriptstyle#1$\hfil\crcr}}}
{\vtop{\ialign{##\crcr\hfil$#2$\vrule width0pt height0pt depth#3\hfil\crcr
\noalign{\nointerlineskip}\hfil$\scriptstyle#1$\hfil\crcr}}}
{\vtop{\ialign{##\crcr\hfil$\scriptstyle#2$\vrule width0pt height0pt
depth#3\hfil\crcr
\noalign{\nointerlineskip}\hfil$\scriptscriptstyle#1$\hfil\crcr}}}
{\vtop{\ialign{##\crcr\hfil$\scriptscriptstyle#2$\vrule width0pt height0pt
depth#3\hfil\crcr
\noalign{\nointerlineskip}\hfil$\scriptscriptstyle#1$\hfil\crcr}}}}
\def\stack#1#2#3{\rlap{#1}\lower#3\hbox{#2}}
\def\twiddlespace{1.2truept}
\def\dtwiddle{\displaystyle\sim}
\def\ttwiddle{\textstyle\sim}
\def\stwiddle{\scriptstyle\sim}
\def\sstwiddle{\scriptscriptstyle\sim}
\def\doubledtwiddle{\stack{$\dtwiddle$}{$\dtwiddle$}{\twiddlespace}}
\def\doublettwiddle{\stack{$\ttwiddle$}{$\ttwiddle$}{\twiddlespace}}
\def\doublestwiddle{\stack{$\stwiddle$}{$\stwiddle$}{\twiddlespace}}
\def\doublesstwiddle{\stack{$\sstwiddle$}{$\sstwiddle$}{\twiddlespace}}
\def\tripledtwiddle{\stack{$\dtwiddle$}{$\doubledtwiddle$}{\twiddlespace}}
\def\triplettwiddle{\stack{$\ttwiddle$}{$\doublettwiddle$}{\twiddlespace}}
\def\triplestwiddle{\stack{$\stwiddle$}{$\doublestwiddle$}{\twiddlespace}}
\def\triplesstwiddle{\stack{$\sstwiddle$}{$\doublesstwiddle$}{\twiddlespace}}
\def\quadrupledtwiddle{\stack{$\dtwiddle$}{$\tripledtwiddle$}{\twiddlespace}}
\def\quadruplettwiddle{\stack{$\ttwiddle$}{$\triplettwiddle$}{\twiddlespace}}
\def\quadruplestwiddle{\stack{$\stwiddle$}{$\triplestwiddle$}{\twiddlespace}}
\def\quadruplesstwiddle{\stack{$\sstwiddle$}{$\triplesstwiddle$}{\twiddlespace}}\def\quadru
\def\strikedist{3pt}
\def\dstrike{\vrule width7pt height0pt depth.4pt}
\def\tstrike{\vrule width7pt height0pt depth.4pt}
\def\sstrike{\hbox{\vrule width5pt height0pt depth.4pt}}
\def\ssstrike{\hbox{\vrule width3pt height0pt depth.4pt}}
\def\strike{{\mathchoice{\dstrike}{\tstrike}{\sstrike}{\ssstrike}}}
\def\ub#1{\underput\strike{#1}{\strikedist}}
\newcommand\!a{{\boldsymbol a}}
\newcommand\!b{{\boldsymbol b}}
\newcommand\!c{{\boldsymbol c}}
\newcommand\!d{{\boldsymbol d}}
\newcommand\!e{{\boldsymbol e}}
\newcommand\!f{{\boldsymbol f}}
\newcommand\!g{{\boldsymbol g}}
\newcommand\!h{{\boldsymbol h}}
\newcommand\!i{{\boldsymbol i}}
\newcommand\!j{{\boldsymbol j}}
\newcommand\!k{{\boldsymbol k}}
\newcommand\!l{{\boldsymbol l}}
\newcommand\!m{{\boldsymbol m}}
\newcommand\!n{{\boldsymbol n}}
\newcommand\!o{{\boldsymbol o}}
\newcommand\!p{{\boldsymbol p}}
\newcommand\!q{{\boldsymbol q}}
\newcommand\!r{{\boldsymbol r}}
\newcommand\!s{{\boldsymbol s}}
\newcommand\!t{{\boldsymbol t}}
\newcommand\!u{{\boldsymbol u}}
\newcommand\!v{{\boldsymbol v}}
\newcommand\!w{{\boldsymbol w}}
\newcommand\!x{{\boldsymbol x}}
\newcommand\!y{{\boldsymbol y}}
\newcommand\!z{{\boldsymbol z}}
\newcommand\!A{{\boldsymbol A}}
\newcommand\!B{{\boldsymbol B}}
\newcommand\!C{{\boldsymbol C}}
\newcommand\!D{{\boldsymbol D}}
\newcommand\!E{{\boldsymbol E}}
\newcommand\!F{{\boldsymbol F}}
\newcommand\!G{{\boldsymbol G}}
\newcommand\!H{{\boldsymbol H}}
\newcommand\!I{{\boldsymbol I}}
\newcommand\!J{{\boldsymbol J}}
\newcommand\!K{{\boldsymbol K}}
\newcommand\!L{{\boldsymbol L}}
\newcommand\!M{{\boldsymbol M}}
\newcommand\!N{{\boldsymbol N}}
\newcommand\!O{{\boldsymbol O}}
\newcommand\!P{{\boldsymbol P}}
\newcommand\!Q{{\boldsymbol Q}}
\newcommand\!R{{\boldsymbol R}}
\newcommand\!S{{\boldsymbol S}}
\newcommand\!T{{\boldsymbol T}}
\newcommand\!U{{\boldsymbol U}}
\newcommand\!V{{\boldsymbol V}}
\newcommand\!W{{\boldsymbol W}}
\newcommand\!X{{\boldsymbol X}}
\newcommand\!Y{{\boldsymbol Y}}
\newcommand\!Z{{\boldsymbol Z}}
\newcommand\!alpha{{\boldsymbol\alpha}}
\newcommand\!beta{{\boldsymbol\beta}}
\newcommand\!gamma{{\boldsymbol\gamma}}
\newcommand\!delta{{\boldsymbol\delta}}
\newcommand\!epsilon{{\boldsymbol\epsilon}}
\newcommand\!zeta{{\boldsymbol\zeta}}
\newcommand\!eta{{\boldsymbol\eta}}
\newcommand\!theta{{\boldsymbol\theta}}
\newcommand\!iota{{\boldsymbol\iota}}
\newcommand\!kappa{{\boldsymbol\kappa}}
\newcommand\!lambda{{\boldsymbol\lambda}}
\newcommand\!mu{{\boldsymbol\mu}}
\newcommand\!nu{{\boldsymbol\nu}}
\newcommand\!xi{{\boldsymbol\xi}}
\newcommand\!pi{{\boldsymbol\pi}}
\newcommand\!rho{{\boldsymbol\rho}}
\newcommand\!sigma{{\boldsymbol\sigma}}
\newcommand\!tau{{\boldsymbol\tau}}
\newcommand\!upsilon{{\boldsymbol\upsilon}}
\newcommand\!phi{{\boldsymbol\phi}}
\newcommand\!chi{{\boldsymbol\chi}}
\newcommand\!psi{{\boldsymbol\psi}}
\newcommand\!omega{{\boldsymbol\omega}}
\newcommand\!varepsilon{{\boldsymbol\varepsilon}}
\newcommand\!vartheta{{\boldsymbol\vartheta}}
\newcommand\!varpi{{\boldsymbol\varpi}}
\newcommand\!varrho{{\boldsymbol\varrho}}
\newcommand\!varsigma{{\boldsymbol\varsigma}}
\newcommand\!varphi{{\boldsymbol\varphi}}
\newcommand\!Gamma{{\boldsymbol\Gamma}}
\newcommand\!Delta{{\boldsymbol\Delta}}
\newcommand\!Theta{{\boldsymbol\Theta}}
\newcommand\!Lambda{{\boldsymbol\Lambda}}
\newcommand\!Xi{{\boldsymbol\Xi}}
\newcommand\!Pi{{\boldsymbol\Pi}}
\newcommand\!Sigma{{\boldsymbol\Omega\eigma}}
\newcommand\!Upsilon{{\boldsymbol\Upsilon}}
\newcommand\!Phi{{\boldsymbol\Phi}}
\newcommand\!Psi{{\boldsymbol\Psi}}
\newcommand\!Omega{{\boldsymbol\Omega}}
\begin{document}

\title [Linear finite element for Naghdi shell]
{A linear finite element procedure \\for the Naghdi shell model}

%{Project Description}

\author{Sheng Zhang}

%\thanks{March 2010}
\thanks{Department of Mathematics, Wayne State University, Detroit, MI 48202 (\texttt{szhang@wayne.edu})}
%\date{October 2000}

\begin{abstract}
%A shell deformation could be bending dominated, membrane/shear dominated, or intermediate.
We prove the accuracy of a mixed finite element method for bending dominated shells in which a major part of the membrane/shear strain is
reduced, to free up membrane/shear locking.
When no part of the membrane/shear strain is reduced, the method becomes a
consistent discontinuous Galerkin method that is proven accurate for 
membrane/shear dominated shells and intermediate shells. The two methods can be coded in a single program by using a parameter.
We propose a procedure of numerically detecting the asymptotic behavior of a shell,
choosing the parameter value in the method, and
producing accurate approximation for a given shell problem.
The method uses piecewise linear functions to approximate all the variables.
The analysis is carried out for shells whose middle surfaces have the most general geometries,
which shows that the method has the optimal order of accuracy for general shells and the accuracy is robust with respect to the shell
thickness.
In the particular case that  the  geometrical coefficients %, including components of the curvature tensors and Christoffel symbols,
of the
shell middle surface are piecewise constants the
accuracy is uniform with
respect to the shell thickness.

\vspace{12pt}

\noindent{\sc Key words.} Naghdi shell model, discontinuous Galerkin, mixed method, linear finite element.
\newline \noindent{\sc Subject classification.} 65N30, 65N12, 74K25.
\end{abstract}
\maketitle

%\tableofcontents

\section{Introduction}

In the Naghdi shell model, the strain energy is a sum of bending, membrane, and transverse shear strain energies, in which the bending part
and the membrane/shear part scale differently with respect to the shell thickness.
Depending on the shell shape, loading force, and boundary conditions,  a shell
deformation could be bending dominated, membrane/shear dominated, or intermediate.
These asymptotic behaviors are very different from each other. It has been unsuccessful to
find a single finite element model that is provably accurate and reliable, which is
capable of accurately simulating shell behaviors in all the asymptotic regimes.
We propose a finite element procedure
for numerical computation of shell mechanics, in which we only need to write one program involving
an adjustable parameter which can accurately compute
shells of any kind of asymptotic behaviors.
The method is primarily designed for bending dominated shells, with an intention of being
useful to membrane/shear dominated and intermediate shell deformations.
Setting the program parameter to different values, we get programs for two different methods,
one of which is suitable for bending dominated shell deformations  and the other
is for membrane/shear dominated and intermediate shells.
We prove that the methods produce accurate results for bending and non-bending dominated shells, respectively,
and propose a procedure for properly using the methods to obtain good numerical results for any given shell problem.
Proving the theorems on error estimates for the two methods constitutes the major part of this paper.

%The finite element procedure comprises  two steps.
%We use both the two  methods  to compute any given shell problem. A comparison of the two numerical solutions
%will determine the shell deformation type and decide which solution is the accurate one.
The  first method is accurate for bending dominated shells, but may not be accurate for membrane/shear dominated shells or intermediate shells.
It is a partially projected mixed finite element method, in which a major part of the membrane/shear energy is reduced.
In the method, the primary variables, including the tangential displacement vector, transverse displacement scalar, and normal fiber rotation vector,
are approximated by discontinuous piecewise linear functions on all elements but those that have edges belong to the portion of shell boundary
on which the shell is free or subject to stress boundary condition. %If an element
%has an edge on the free boundary,  for stability purpose, the displacement approximations need to be enriched.
On elements that have one edge on the free boundary, for stability purpose, we need to add two degree of freedoms to a linear displacement approximation.
It is sufficient to approximate the displacement
by quadratic functions. On elements that have two edges on the shell free boundary, the stability requires four additional degree of freedoms, and
one may use cubic functions to do the
approximation.
The secondary membrane stress tensor and transverse shear stress vector are approximated
by continuous piecewise linear functions.
Peculiar to shell problems, it seems that generally there is no absolute stability holding
uniformly with respect to the shell thickness. The curved nature of the shell
midsurface imposes an additional condition on the finite element triangulation that the mesh needs to be properly refined where the geometrical coefficients
changes rapidly, and the  refinement is related to the shell thickness.
Although the discontinuous approximation to the primary variables
renders the indispensable flexibility, it still can not completely assure the usual inf-sup condition for mixed methods, which  needs to be compromised slightly.
As a consequence, in the error estimate a factor of the form
\begin{equation*}
1+\veps^{-1}
\max_{\tau\in\T_h}h^2_\tau\left(\sum_{\alpha,\beta,\lambda=1,2}|\Gamma^{\lambda}_{\alpha\beta}|_{1,\infty,\tau}+
\sum_{\alpha,\beta=1,2}|b_{\alpha\beta}|_{1,\infty,\tau}+\sum_{\alpha,\beta=1,2}|b^\beta_\alpha|_{1,\infty,\tau}\right)
\end{equation*}
is multiplied to the norm of error of optimal approximation whose order is $\O(h)$.
Here, $\T_h$ is a
finite element triangulation of the parameter domain of the shell midsurface, which
is assumed to be shape regular, but not necessarily quasi-uniform.
Therefore,  optimal accuracy requires the above quantity to be bounded by a constant $C$.
Throughout the paper, we use $C$ to denote a constant that is independent of $\veps$. It may depend on the
shape regularity $\K$ of the triangulation, but otherwise independent of the finite element mesh.
We will simply say that such a constant is independent of  the triangulation.
When the curvature tensors $b_{\alpha\beta}$, $b^{\alpha}_\beta$, and the Christoffel symbols $\Gamma^{\lambda}_{\alpha\beta}$ are
piecewise constant, this extra condition is trivially satisfied, which is the case,
for example, when the shell is circular cylindrical and parameterized by cylindrical coordinates.

The second method is accurate for shell problems that
are  membrane/shear dominated or intermediate.  The finite element method  is essentially a consistent discontinuous Galerkin method
for a singular perturbation problem, which only involves the primary variables. In the method all the variables are approximated by
discontinuous piecewise linear functions, and no enrichment for displacement on elements attached to the shell free boundary
should be included, which is necessary for stability of the method for bending dominated shells but could thwart the stability and well posedness of the finite element model for
shell deformations  that are not bending dominated.
In this case,
the well-posedness of the finite element model can not be assured by a ``sufficiently big  penalty'' on the discontinuity because of the weakness of the membrane/shear energy norm.
We prove that if the triangulation satisfies the condition that
\begin{equation*} %\label{m-stability-condition}
\max_{\tau\in\T_h}h^2_\tau\left(\sum_{\alpha,\beta,\lambda,\delta=1,2}|\Gamma^{\lambda}_{\alpha\beta}|_{\delta,\infty,\tau}+
\sum_{\alpha,\beta,\delta=1,2}|b_{\alpha\beta}|_{\delta,\infty,\tau}+\sum_{\alpha,\beta,\delta=1,2}|b^\beta_\alpha|_{\delta,\infty,\tau}\right)
\le C\veps
\end{equation*}
then when the penalty constant, that is independent of the shell thickness and triangulation,
on the discontinuity is sufficiently big, the finite element model is well defined, and has the optimal order of accuracy
in the relative energy norm for intermediate and membrane/shear dominated shell deformations.
When this method is applied to bending dominated shells, the numerical membrane/shear locking could significantly
undermine its accuracy.

For many shell problems, one can determine their asymptotic  regimes {\em a priori} by partial differential equation theories.
For example, a totally clamped elliptic shell must be membrane/shear dominated no matter how it is loaded;
a partially clamped spherical shell is always intermediate;  a cylindrical shell clamped only on its edge of rulings and subject to a transverse force
is bending dominated, etc. \cite{Bathe-book, CiarletIII, Sanchez}.  We can also use the finite element methods to numerically detect the asymptotic behavior of a shell  and determine
its asymptotic regime, and thus determine which method is suitable for the given shell
problem. This is  based on some results of shell asymptotic analysis and the accuracy of finite element methods to be proved below.
We describe a procedure of numerical detection  at the end of this paper.

The paper is organized as follows. In Section~\ref{SHELL} we recall the shell model and its equivalent formulations. In this section we also include an
asymptotic analysis on an abstract level, from which we derive some asymptotic estimates on the shell model solution.
These asymptotic estimates will be needed to
interpret our results of numerical analysis.  They also furnish the basis for the mechanism we use to detect the type of shell deformations.
In Section~\ref{FEMmodel} we define the finite element models and the finite element spaces.
Section~\ref{KornOnShell} contains a discrete Korn type inequality for Naghdi shell, which is crucial for numerical analysis.
In Section~\ref{BendingErrorAnalysis} we analyze the accuracy of the mixed finite element  model when the shell problem is bending dominated.
Section \ref{MembraneErrorAnalysis}
is devoted to analysis of the discontinuous Galerkin method for membrane/shear dominated shells and intermediate shells. Finally, in Section~\ref{Procedure}, we
describe a procedure of choosing the suitable method for a given shell problem.
We use superscripts and subscripts to indicate contravariant and covariant components of vectors and tensors.
Greek scripts take their values in $\{1, 2\}$, while Latin scripts take their values in $\{1, 2, 3\}$.
Summation rules with respect to repeated sup and subscripts will also be used.
We use $A\lesssim B$ to represent $A\le CB$, and $A\simeq B$ means $A\lesssim B$ and $B\lesssim A$.

\section{The shell model and its asymptotic behavior}
\label{SHELL}
\subsection{The Naghdi shell model}
Let $\t\Omega\subset\R^3$
be the middle surface of a shell of thickness $2\veps$.
It is  the image of a domain
$\Omega\subset\R^2$ through a mapping $\!Phi$.
The coordinates $x_\alpha\in\Omega$
then furnish the curvilinear coordinates on $\t\Omega$.
We assume that at any point on the surface,
along the coordinate lines,
the two tangential vectors
$\!a_{\alpha}={\partial\!Phi}/{\partial x_{\alpha}}$
are linearly independent.
The unit vector
$\!a_3=(\!a_1\x\!a_2)/|\!a_1\x\!a_2|$ is normal to $\t\Omega$.
The triple $\!a_i$ furnishes the covariant basis on $\t\Omega$.
%\begin{figure}[!ht]
%\centerline{\input{shell-mid.pspdftex}}
%caption{A triangularization of the shell middle surface}
%\end{figure}
The contravariant basis
$\!a^i$ is defined by the relations
$\!a^{\alpha}\cdot\!a_{\beta}=\delta^{\alpha}_{\beta}$ and $\!a^3=\!a_3$,
in which $\delta^{\alpha}_{\beta}$ is the Kronecker delta.
%It is obvious that $\!a^{\alpha}$ are also tangent to the surface.
The metric tensor is defined in terms of its  covariant components
by $a_{\alpha\beta}=\!a_{\alpha}\cdot\!a_{\beta}$,  the determinant of
which is denoted by $a$. The contravariant components
are given by
$a^{\alpha\beta}=\!a^{\alpha}\cdot\!a^{\beta}$.
The curvature tensor
has covariant components
$b_{\alpha\beta}=\!a_3\cdot\partial_{\beta}\!a_{\alpha}$, whose
mixed components are $b^{\alpha}_{\beta}=a^{\alpha\gamma}b_{\gamma\beta}$.
The symmetric tensor $c_{\alpha\beta}=b^\gamma_\alpha b_{\gamma\beta}$ is called the third
fundamental form of the surface.
The Christoffel symbols
are defined by
$\Gamma^{\gamma}_{\alpha\beta}
=\!a^{\gamma}\cdot\partial_{\beta}\!a_{\alpha}$,
which are symmetric with respect to the subscripts. The derivative of a scalar is a covariant vector.
The covariant derivative of a vector or tensor is a higher order tensor.
The formulas below will be used.
\begin{equation}\label{covariant-derivative}
\begin{gathered}
u_{\alpha|\beta}=\partial_{\beta}u_{\alpha}-\Gamma^{\gamma}_{\alpha\beta}
u_{\gamma},\quad
%u^{\alpha}|_{\beta}=\partial_{\beta}u^{\alpha}+\Gamma^{\alpha}_{\gamma\beta}
%u^{\gamma},\quad
\eta^\alpha|_\beta=\partial_\beta\eta^\alpha+\Gamma^\alpha_{\beta\delta}\eta^\delta,\\
%(\partial_\alpha w)|_\beta=\partial_{\alpha\beta}w-\Gamma^{\lambda}_{\alpha\beta}\partial_{\lambda}w,\\
\sigma^{\alpha\beta}|_{\gamma}=\partial_{\gamma}\sigma^{\alpha\beta}
+\Gamma^{\alpha}_{\gamma\lambda}\sigma^{\lambda\beta}
+\Gamma^{\beta}_{\gamma\tau}\sigma^{\alpha\tau}.
%b^{\gamma}_{\alpha|\beta}=\partial_{\beta}b^{\gamma}_{\alpha}
%+\Gamma^{\gamma}_{\lambda\beta}b^{\lambda}_{\alpha}-
%\Gamma^{\tau}_{\alpha\beta}b^{\gamma}_{\tau}.
\end{gathered}
\end{equation}
Product rules for covariant differentiation, like
%\begin{equation*}\label{3ds:product}
$(\sigma^{\alpha\lambda}u_{\lambda})|_{\beta}=
\sigma^{\alpha\lambda}|_{\beta}u_{\lambda}
+\sigma^{\alpha\lambda}u_{\lambda|\beta}$,
%\end{equation*}
are valid. For more information see \cite{GZ}.

In a shell deformation, the middle surface displacement is a vector field $u_\alpha\!a^\alpha+w\!a^3$, and normal fiber rotation
is represented by a vector field $\theta_\alpha\!a^\alpha$.
The Naghdi shell model \cite{ABrezzi2, Bathe-book,  CiarletIII,  Naghdi} uses the displacement components $u_\alpha$, $w$, and
the rotation components $\theta_\alpha$ as the primary variables.
The bending strain tensor, membrane strain tensor, and transverse shear strain vector due to the deformation represented by
such a set of primary variables are
\begin{equation}\label{N-bending}
\rho_{\alpha\beta}(\!theta, \!u, w)=
\frac12(\theta_{\alpha|\beta}+\theta_{\beta|\alpha})-\frac12(b^\gamma_\alpha u_{\gamma|\beta}+b^\gamma_\beta u_{\gamma|\alpha})+c_{\alpha\beta}w,
\end{equation}
\begin{equation}\label{N-metric}
\gamma_{\alpha\beta}(\!u,w)=
\frac12(u_{\alpha|\beta}+u_{\beta|\alpha})
-b_{\alpha\beta}w,
\end{equation}
\begin{equation}\label{N-shear}
\tau_\alpha(\!theta, \!u, w)=\partial_\alpha w+b^\gamma_\alpha u_\gamma+\theta_\alpha.
\end{equation}
Here we used $\!theta$ and $\!u$ to represent the vectors $\theta_\alpha$ and $u_\alpha$, respectively.

Let the boundary $\partial\t\Omega$ be divided to $\partial^D\t\Omega\cup\partial^S\t\Omega\cup\partial^F\t\Omega$.
On $\partial^D\t\Omega$ the shell is clamped, on $\partial^S\t\Omega$ the shell is soft-simply supported, and
on $\partial^F\t\Omega$ the shell is free of displacement constraint and subject to force or moment  only.
There are $32$ different ways to specify boundary conditions at any point on the shell boundary, of which we consider the three most typical.
%For a shell to allow bending dominatd deformation, part of its boundry needs to be free \cite{CiarletIII}.
%We therefore assume $\partial^F\t\Omega\neq \emptyset$.
The shell model is
defined in the Hilbert space
\begin{multline}\label{N-space}
H=\{(\!phi, \!v, z)\in \!H^1\x\!H^1\x H^1;\  \phi_\alpha, v_\alpha \text{ and } z \text{ are }0\  \text{on}\ \partial^D\Omega, \\
\text{ and  }v_\alpha\text{ and } z \text{ are }0\
\text{on}\ \partial^S\Omega\}.
\end{multline}
The model determines a unique $(\!theta, \!u, w)\in H$
such that
\begin{multline}\label{N-model}
\frac13\int_{\t\Omega}
a^{\alpha\beta\lambda\gamma}\rho_{\lambda\gamma}(\!theta, \!u, w)
\rho_{\alpha\beta}
(\!phi, \!v, z)\\
+\veps^{-2}\int_{\t\Omega}
a^{\alpha\beta\lambda\gamma}\gamma_{\lambda\gamma}(\!u, w)
\gamma_{\alpha\beta}(\!v,z)+\kappa\mu\veps^{-2}\int_{\t\Omega}a^{\alpha\beta}\tau_\alpha(\!theta, \!u, w)\tau_\beta(\!phi, \!v, z)
\\
\hfill =
\int_{\t\Omega}
(p^{\alpha}v_{\alpha}+
p^3z)
+\int_{\partial^S\t\Omega}r^\alpha\phi_\alpha
+\int_{\partial^F\t\Omega}\left(q^\alpha v_\alpha+q^3z+r^\alpha\phi_\alpha\right)
\\
\hfill \forall\
(\!phi, \!v,z) \in H.\\
\end{multline}
Here, $p^i$ is the resultant loading force density on the shell mid surface, and  $q^i$ and $r^\alpha$ are the force resultant and
moment resultant on the shell edge \cite{Naghdi}.
The factor $\kappa$ is a shear correction factor.
%, often assumed to be $5/6$, which we think should be $1$ \cite{Naghdi-arch}.
The fourth order contravariant tensor
$a^{\alpha\beta\gamma\delta}$ is the elastic tensor of the shell,
defined by
\begin{equation*}
a^{\alpha\beta\gamma\delta}=\mu (a^{\alpha\gamma}a^{\beta\delta}+a^{\beta\gamma}a^{\alpha\delta})+
\frac{2\mu\lambda}{2\mu+\lambda}
a^{\alpha\beta}a^{\gamma\delta}.
\end{equation*}
Here, $\lambda$ and $\mu$ are the Lam\'e coefficients of the elastic material, which are assumed to be constant.
The compliance tensor of the shell defines the inverse operator of the elastic tensor, given by
\begin{equation*}
a_{\alpha\beta\gamma\delta}=\frac{1}{2\mu}\left[
\frac12(a_{\alpha\delta}a_{\beta\gamma}+
a_{\beta\delta}a_{\alpha\gamma})-\frac{\lambda}{2\mu+3\lambda}a_{\alpha\beta}a_{\gamma\delta}
\right].
\end{equation*}
The integrals on $\t\Omega$ are taken with respect to the surface area element. A function $f$ defined on $\Omega$ is identified with a function
on $\t\Omega$ through the mapping $\!Phi$ and denoted by the same symbol.
Thus $f(x_\alpha)=f(\!Phi(x_\alpha))$, and we have $\int_{\t\Omega}f=\int_\Omega f(x_\alpha)\sqrt a dx_1dx_2$.
The integrals on the curves $\partial^S\t\Omega$ and $\partial^F\t\Omega$ are taken with respect to their arc length. Similar notations will be used
in the following for integrals on subregions of $\t\Omega$ and any curves in $\t\Omega$.
Generally, we use a tilde to indicate an object or operation on the shell midsurface $\t\Omega$. The same notation
without tilde represents the corresponding object or operation on the coordinate domain $\Omega$.
The model  \eqref{N-model} has a unique solution in the space $H$ \cite{BCM, CiarletIII}. When $\veps\to 0$, its solution
behaves in very different manners. Depending on the shell shape, loading force, and boundary conditions,
the shell deformation could be bending dominated, membrane/shear dominated, or intermediate \cite{Bathe-book, Sanchez}.
If the resultant loading functions $p^i$, $q^i$, and $r^\alpha$ are independent of $\veps$,
when $\veps\to 0$, the  model solution converges to a nonzero limit that solves a limiting bending model  for bending dominated shell problems.
In contrast, the model solution tends to zero for shell deformations that are not bending dominated.
In the latter case, to observe the higher order response from the shell,
we could scale the loading functions by multiplying them with $\veps^{-2}$.

For $(\!theta, \!u, w)$ and $(\!phi, \!v, z)$ in $H$, we define the following bilinear forms corresponding to the bending, membrane, and transverse
shear strain energies, respectively.
\begin{equation}\label{bilinear-forms}
\begin{gathered}
\rho(\!theta, \!u, w; \!phi, \!v, z)=
\frac13\int_{\t\Omega}
a^{\alpha\beta\lambda\gamma}\rho_{\lambda\gamma}(\!theta, \!u, w)
\rho_{\alpha\beta}(\!phi, \!v, z),
\\
\gamma(\!u, w; \!v, z)=\int_{\t\Omega}
a^{\alpha\beta\lambda\gamma}\gamma_{\lambda\gamma}(\!u,w)
\gamma_{\alpha\beta}(\!v,z),
\\
\tau(\!theta, \!u, w; \!phi, \!v, z)=\kappa\mu\int_{\t\Omega}a^{\alpha\beta}\tau_\alpha(\!theta, \!u, w)\tau_\beta(\!phi, \!v, z).
\end{gathered}
\end{equation}
We let
\begin{equation}\label{bilinear-form-a}
a(\!theta, \!u, w; \!phi, \!v, z)=\rho(\!theta, \!u, w; \!phi, \!v, z)+\gamma(\!u, w; \!v, z)+\tau(\!theta, \!u, w; \!phi, \!v, z).
\end{equation}
We also define a linear form to represent the loading functional
\begin{equation}\label{form_f}
\langle \!f; \!phi, \!v, z\rangle=\int_{\t\Omega}
(p^{\alpha}v_{\alpha}+
p^3z)
+\int_{\t\E^S}r^\alpha\phi_\alpha
+\int_{\t\E^F}\left(q^\alpha v_\alpha+q^3z+r^\alpha\phi_\alpha\right).
\end{equation}

As did in \cite{ABrezzi2, Bramble-Sun2, Suri} and the references cited in these papers,
% for Naghdi shell and in \cite{Koiter-shell} for Koiter shell, %to make the analysis of the finite element method easier,
we split the membrane and shear parts in the model by writing $\veps^{-2}=\eps^{-2}+1$, and write the model in an equivalent form.
It seeks $(\!theta\e, \!u\e, w\e)\in H$ such that
\begin{multline}\label{N-0-model}
a(\!theta\e, \!u\e, w\e; \!phi, \!v, z)
+\eps^{-2}\left[
\gamma(\!u\e, w\e; \!v,z)+\tau(\!theta\e, \!u\e, w; \!phi, \!v, z)\right]\\=
\langle \!f; \!phi, \!v, z\rangle\ \ \forall\
(\!phi, \!v,z) \in H.
\end{multline}

We introduce the membrane stress tensor
%\begin{equation*}
$\M^{\eps\alpha\beta}=
\eps^{-2}a^{\alpha\beta\lambda\gamma}\gamma_{\lambda\gamma}(\!u\e, w\e)$
%\end{equation*}
and the shear stress vector $\xi^{\eps\alpha}=\eps^{-2}\kappa\mu a^{\alpha\beta}\tau_\beta(\!theta\e, \!u\e, w\e)$
as new variables, and
write the model in a mixed form. The mixed model seeks $(\!theta\e, \!u\e, w\e)\in H$ and $(\!xi^{\eps}, \M^{\eps})\in V=[L^2]^5$
such that
\begin{multline}\label{N-P-model}
%\begin{gathered}
a(\!theta\e, \!u\e, w\e; \!phi, \!v, z)
+\int_{\t\Omega}
\left[\M^{\eps\alpha\beta}
\gamma_{\alpha\beta}(\!v,z)
+\xi^{\eps\alpha}\tau_\alpha(\!phi, \!v, z)\right]=
\langle \!f; \!phi, \!v, z\rangle\ \ \forall\
(\!phi, \!v,z) \in H,
\\
\int_{\t\Omega}\left[\N^{\alpha\beta}\gamma_{\alpha\beta}(\!u\e, w\e)+\eta^\alpha\tau_\alpha(\!theta\e, \!u\e, w\e)\right]
-
\eps^2\int_{\t\Omega}\left[a_{\alpha\beta\lambda\gamma}\M^{\eps\alpha\beta}\N^{\lambda\gamma}+\frac{1}{\kappa\mu}a_{\alpha\beta}\xi^{\eps\alpha}\eta^\beta\right]=0
\hfill
\\
\hfill\forall\ (\!eta, \N) \in V.\\
%\end{gathered}
\end{multline}
Here $\!xi\e$ and $\!eta$ represent the vectors $\xi^{\eps\alpha}$ and $\eta^\alpha$, and
$\M\e$ and $\N$ denote the tensors $\M^{\eps\alpha\beta}$ and $\N^{\alpha\beta}$, respectively.
This mixed model  is the basis for the mixed finite element method.

%In the next subsection, we present two results in abstract form, which are  applicable to the Naghdi model in the original form \eqref{N-model}
%and the mixed form \eqref{N-P-model}, respectively. The latter result also furnishes a framework for analysis of the finite element model.

\subsection{Asymptotic estimates on the shell model}
\label{abstract}
Notations in this sub-section are independent of the rest of the paper.
The results are presented in an abstract way, from which some useful estimates on the shell models \eqref{N-0-model} and \eqref{N-P-model} will follow.
For the shell model, there is no general theory on the detailed asymptotic structure of its solution. The results in this section provide some minimum
information that will be used in interpreting our results of numerical analysis and validating the procedure of
properly choosing a method to compute a given shell problem.
%The parameter $\eps$ is small.
%The proofs of the theorems are given in the appendix.

Let $H$,
$U$, and $V$ be Hilbert spaces, $A$ and $B$ be
linear continuous operators from $H$ to
$U$ and $V$, respectively.
We assume
\begin{equation}\label{equiva1as}
\|Av\|_U+\|Bv\|_V\simeq\|v\|_H\ \ \forall \ v\in H.
\end{equation}
For any $\eps>0$ and $f\in H^*$, the dual space of $H$, there is a unique $u\e\in H$, such that
\begin{equation}\label{prob1as}
(Au\e,Av)_U+\eps^{-2}(Bu\e,Bv)_V=\langle f,v\rangle
\quad \forall\ v\in H.
\end{equation}
The equivalence \eqref{equiva1as} also implies that we can introduce an equivalent norm by $\|v\|^2_{\H}:=\|Av\|^2_U+\|Bv\|^2_V$ for all $v\in H$. With such a norm
$\H$ is also a Hilbert space.
We let $\ker B\subset H$ be the kernel of the operator $B$, and
let $W\subset V$ be the range of $B$. We define a norm on $W$ by $\|w\|_{W}=\inf_{v\in H, Bv=w}\|v\|_H$ $\forall\ w\in W$, such that
$W$ is isomorphic to $(\ker B)^\perp_\H$.
We let $\overline W$ be the closure of $W$ in $V$.
Thus $W$ is a dense subset of $\overline W$, and  ${\overline W}^*$ is dense  in $W^*$.

The assumption \eqref{equiva1as}  also assures that the limiting problem
\begin{equation}\label{limitas}
(Au^0, Av)_U=\langle f, v\rangle\ \ \forall\ v\in\ker B
\end{equation}
has a unique  solution $u^0\in\ker B$. The functional  $\langle f,v\rangle-(Au^0, Av)_U$ annihilates $\ker B$. Therefore, according to the closed range theorem
there is a $\zeta\in W^*$ such that
\begin{equation*}
\langle f,v\rangle-(Au^0, Av)_U=\langle\zeta,Bv\rangle\ \ \forall\ v\in H.
\end{equation*}
Let $v\e\in H$ be the solution of
\begin{equation}\label{abs-membrane}
\eps^2(Av\e,Av)_U+(Bv\e,Bv)_V=\langle\zeta, Bv\rangle
\ \ \forall\ v\in H.
\end{equation}
We then have the expression that
\begin{equation}\label{abs-decomp}
u\e=u^0+\eps^2v\e.
\end{equation}
%The problem \eqref{abs_membrane} was analyzed in  \cite{Caillerie} and \cite{CR1}.
This is an orthogonal decomposition in $\H$, since $u^0\in\ker B$ and $v\e\in(\ker B)^\perp_{\H}$.

By
Theorem 2.1 of \cite{CR1},  we have
\begin{equation}\label{Thm2.1-1}
\eps\|Av\e\|_U+\|Bv\e\|_{V}\simeq \eps^{-1}\|\zeta\|_{W^*+\eps{\overline W}^*},
\end{equation}
\begin{equation}\label{Thm2.1-2}
\|\pi_{\overline W}Bv\e-\zeta\|_{W^*}+\eps\|Bv\e\|_V\simeq\|\zeta\|_{W^*+\eps{\overline W}^*}.
\end{equation}
Here $\pi_{\overline W}:\overline W\to{\overline W}^*$
is the inverse of Riesz representation.
If  $\zeta\in{\overline W}^*$,  by Theorem 2.2 of \cite{CR1}, we have the stronger estimate that
\begin{equation}\label{Thm2.2}
\eps\|v\e\|_H+\|Bv\e-i_{\overline W}\zeta\|_V+\eps^{-1}\|\pi_{\overline W}Bv\e-\zeta\|_{W^*}\simeq\|i_{\overline W}\zeta\|_{\eps W+\overline W}.
\end{equation}
Here $i_{\overline W}$ is the Riesz representation mapping from ${\overline W}^*$ to $\overline W$, which is the inverse of $\pi_{\overline W}$.
Since ${\overline W}^*$ is dense in $W^*$, we have $\lim_{\eps\to 0}\|\zeta\|_{W^*+\eps{\overline W}^*}=0$. If  $\zeta\in{\overline W}^*$
we have $\lim_{\eps\to 0}\|i_{\overline W}\zeta\|_{\eps W+\overline W}=0$ \cite{BL}.

The Naghdi shell model \eqref{N-model}, or equivalently \eqref{N-0-model}, fits in the form \eqref{prob1as} in an obvious manner.
The element $u\e$ represents the primary shell model solution $(\!theta\e, \!u\e, w\e)$.
The operator $A$ is the bending strain operator and $B$ combines the membrane and shear strain operators,
for which a proof of the condition \eqref{equiva1as} can be found in \cite{CiarletIII}.
The space $H$ is that defined by \eqref{N-space}. The spaces $U$ and $V$ are multiple $L^2$ spaces defined on $\Omega$.
The subspace $\ker B$ is composed of pure bending deformations.
The condition $\ker B\ne 0$ means the shell allows pure bending. The condition  $f|_{\ker B}\ne 0$ means that the loading  on the shell actually activates pure bending deformations.

A shell problem is bending dominated if and only $f|_{\ker B}\ne 0$.
In this case, in the expression \eqref{abs-decomp} we have $u_0\ne 0$.
For $w\in W\subset \overline W$, the function $\|\pi_{\overline W}w\|_{W^*}$ is a norm that is weaker than the norm of $\overline W$. We let $\overline{\overline W}$
be the completion of $W$ in this weaker norm.
In view of the estimates
\eqref{Thm2.1-1} and \eqref{Thm2.1-2}, we have
\begin{equation}\label{abs-bending-est}
\begin{gathered}
\lim_{\eps\to 0}u\e=u^0\ \text{ in } H\text{ and } u^0\ne 0, \\
\lim_{\eps\to 0}\eps^{-2}Bu\e=\xi^0\ \text{ in }\overline{\overline W}.
\end{gathered}
\end{equation}
The second convergence means that {\em
the scaled
membrane stress tensor and scaled shear stress vector  $(\M\e, \!xi\e)$ converges to a finite limit,
}
which was introduced in the mixed formulation \eqref{N-P-model}
as an independent variable and will be approximated independently in the finite element model.
This convergence is, however,  in an abstract norm that might not be explainable in terms of  Sobolev norms.
The quantity $\eps^2(Au\e, Au\e)_U$ represents the bending strain energy, and $(Bu\e, Bu\e)_V$ represents the sum of membrane and shear strain
energies.
We shall call $\eps^2(Au\e, Au\e)_U+(Bu\e, Bu\e)_V$ the total strain energy.
It is seen from \eqref{abs-decomp}, \eqref{Thm2.1-1} and \eqref{Thm2.1-2} that
%\begin{equation*}
${(Bu\e, Bu\e)_V}/{[\eps^2(Au\e, Au\e)_U]}\lesssim \|\zeta\|^2_{W^*+\eps{\overline W}^*}\to 0\ \ (\eps\to 0)$.
%\end{equation*}
This explains the terminology of bending dominated shells.

A shell problem is not bending dominated if $f|_{\ker B}=0$.  This is to say that $u^0=0$ in \eqref{abs-decomp}, or formally the shell only shows
higher order response to the loading force.  In this case, we have $\langle f, v\rangle=\langle\zeta, Bv\rangle$ for all $v\in H$.
If $\zeta\in {\overline W}^*$, the shell is membrane/shear dominated.
Otherwise, it is intermediate.
For membrane/shear dominated shells, we see from \eqref{Thm2.2} that
\begin{equation}\label{membrane-goto-0}
%\begin{gathered}
\|u\e\|_H=\eps^2\|v\e\|_H\simeq \eps\|i_{\overline W}\zeta\|_{\eps W+\overline W}=o(\eps).
%\|Bu\e\|_V\simeq\eps^2\|i_{\overline W}\zeta\|_V.
%\end{gathered}
\end{equation}
This is strikingly different from the limiting behavior of bending dominated shells, see the first estimate in \eqref{abs-bending-est}.
For intermediate shells, we see from \eqref{Thm2.1-1} and \eqref{Thm2.1-2} that
\begin{equation}\label{intermediate-goto-0}
%\begin{gathered}
\|u\e\|_H\lesssim \|\zeta\|_{W^*+\eps{\overline W}^*}\to 0\ \ (\eps\to 0).
%\|Bu\e\|_V\lesssim \eps\|\zeta\|_{W^*+\eps{\overline W}^*}=o(\eps),\\
%\|\pi_{\overline W}Bu\e\|_{W^*}\simeq\eps^2\|\zeta\|_{W^*}.
%\end{gathered}
\end{equation}

For non-bending dominated shells, it is convenient
to consider the scaled deformation $v\e=\eps^{-2}u\e$. We use $u\e$, instead of $v\e$,  to denote the scaled deformation, which then is the solution of
 \begin{equation}\label{prob2as}
\eps^2(Au\e,Av)_U+(Bu\e,Bv)_V=\langle f,v\rangle
\quad \forall\ v\in H.
\end{equation}
Here $f$ is independent of $\eps$, and  there is a nonzero  $\zeta\in W^*$ such that $\langle f, v\rangle=\langle\zeta, Bv\rangle$ for all $v\in H$.
It is seen that $u\e\in (\ker B)^\perp_\H$ for all $\eps >0$.
The shell problem is membrane/shear dominated, if $f|_{\ker B}=0$ and $\zeta\in{\overline W}^*$. In this case $\|Bv\|_V$ defines a norm for  $v\in (\ker B)^\perp_\H$.
We use $\overline H$ to denote the completion of $(\ker B)^\perp_\H$ with respect to this norm.
From \eqref{Thm2.2} we see that $u\e$ converges in this norm. From \eqref{Thm2.2} we  also see that the total energy converges to a nonzero limit. We have
\begin{equation}\label{abs-membrane-est}
\begin{gathered}
\lim_{\eps\to 0}u\e=u^0\ \text{ in } \overline H,\\
\eps^2(Au\e,A u\e)_U+(Bu\e,Bu\e)_V\to \|i_{\overline W}\zeta\|^2_V\ne 0.
\end{gathered}
\end{equation}
We also see from \eqref{Thm2.2} that the problem is membrane/shear dominated since
\begin{equation*}
{\eps^2(Au\e, Au\e)_U}/{(Bu\e, Bu\e)_V}\lesssim \|i_{\overline W}\zeta\|_{\eps W+\overline W}
\to 0\ \ (\eps\to 0).
\end{equation*}

The shell problem is intermediate, if $f|_{\ker B}=0$ and $\zeta\not\in{\overline W}^*$. In this case $\|\pi_{\overline W}Bv\|_{W^*}$ defines a norm for  $v\in (\ker B)^\perp_\H$
that is weaker than the $\overline H$ norm. We denote the completion of $(\ker B)^\perp_\H$ with respect to this weaker norm by $\overline{\overline H}$.
From \eqref{Thm2.1-2} we see that $u\e$ converges to a limit $u^0$ in $\overline{\overline H}$. From \eqref{Thm2.1-1} we have an equivalence estimate
on the total energy. These are summarized as
\begin{equation}\label{abs-intermediate-est}
\begin{gathered}
\lim_{\eps\to 0}u\e=u^0 \ \text{ in } \overline{\overline H},\\
\eps^2(Au\e,A u\e)_U+(Bu\e,Bu\e)_V\simeq  \eps^{-2}\|\zeta\|^2_{W^*+\eps{\overline W}^*}.
\end{gathered}
\end{equation}
As $\eps\to 0$, the total energy tends to infinity, and it blows up to infinity at a rate of $o(\eps^{-2})$.

\subsection{An equivalent estimate for the mixed model}
\label{mixed-equivalence}
The mixed form of the Naghdi shell model \eqref{N-P-model} fits in the abstract problem
\eqref{isomorphism-abs} below. %The result will be used in analysis of the finite element model.
Let $H, V$ be Hilbert spaces. Let $a(\cdot,\cdot)$ and $c(\cdot,\cdot)$ be symmetric bilinear forms on $H$ and $V$, and $b(\cdot,\cdot)$
be a bilinear form on $H\x V$. We assume that there is  a constant $C$ such that
\begin{equation}\label{isomorphism-condition}
\begin{gathered}
|a(u, v)|\le C\|u\|_H\|v\|_H, \quad C^{-1}\|u\|_H^2\le a(u, u)\ \forall\ u, v\in H,\\
|c(p, q)|\le C\|p\|_V\|q\|_V, \quad C^{-1}\|p\|_V^2\le c(p, p)\ \forall\ p, q\in V,\\
|b(v, q)|\le C\|v\|_H\|q\|_V\ \forall\ v\in H, q\in V.
\end{gathered}
\end{equation}
For  $f\in H^*$ and $g\in V^*$,
we seek $u\in H$ and $p\in V$ such that
\begin{equation}\label{isomorphism-abs}
\begin{gathered}
a(u, v)+b(v, p)=\langle f, v\rangle\ \ \forall\ v\in H,\\
b(u, q)-\eps^2c(p, q)=\langle g, q\rangle\ \ \forall\ q\in V.
\end{gathered}
\end{equation}
This problem  has a unique solution in the space $H\x V$ \cite{ABrezzi2},
for which we need an accurate estimate.
We define a weaker (semi) norm on $V$ by
\begin{equation}\label{isomorphism-weak}
|q|_{\overline V}=\sup_{v\in H}\frac{b(v,q)}{\|v\|_H}\ \ \forall\ q\in V.
\end{equation}
%If $W$ is dense in $V$,
%this is a  weaker norm. Otherwise, it is a semi-norm.
%In terms of mixed finite element models,
%this is the case in which there are spurious modes for the $p$ variable.
%Whether $W$ is dense in $V$ or not,
We have the following
equivalence result, in which the equivalent constant is only dependent on the constant $C$ in \eqref{isomorphism-condition}.
%\begin{thm}\label{isomorphism-thm}
%There exist constants $C_\alpha$ that only depend on the constant in \eqref{isomorphism-condition} such that
\begin{multline}\label{isomorphism-equiv}
\|u\|_H+|p|_{\overline V}+\eps\|p\|_V\simeq
\sup_{v\in H, q\in V}\frac{a(u, v)+b(v, p)-
b(u, q)+\eps^2c(p, q)}{\|v\|_H+|q|_{\overline V}+\eps\|q\|_V}
\\
\forall\ u\in H, \ p\in V.
\end{multline}
%\end{thm}
A proof of this result can be obtained by using the techniques of proofs of Proposition 2.2 of \cite{Brezzi-Fortin-stable} and
Proposition 7.2.9 of \cite{Bathe-book}.
This equivalence will be used to analyze the finite element model, for which we verify the conditions \eqref{isomorphism-condition}
with the constant $C$ being independent of the finite element mesh and shell thickness.

\section{The finite element model}
\label{FEMmodel}
%\subsection{The finite element model}
We assume that the coordinate domain $\Omega$ is a polygon.
On $\Omega$, we introduce a triangulation $\T_h$
that is shape regular but not necessarily quasi-uniform.
The shape regularity of a triangle is defined as the ratio of the diameter
of its smallest circumscribed circle and the diameter of its largest inscribed circle. The shape regularity of a triangulation is the maximum of
shape regularities of all its triangular elements. When we say a $\T_h$ is shape regular we mean that the shape regularity
of $\T_h$ is bounded by an absolute constant $\K$.  Shape regular meshes allow local refinements, and are capable of
resolving the boundary and internal layers that are very common in  solutions of the shell model.
We also use $\T_h$ to denote the set of all (open) triangles of the triangulation, and
let $\Omega_h=\cup_{\tau\in\T_h}\tau$.
We use $\E^0_h$ to denote both
the union of interior edges and the set of all interior edges.
The set of edges on the boundary $\partial\Omega$
is denoted by $\E^{\partial}_h$ that is divided as $\E^D_h\cup\E^S_h\cup \E^F_h$, corresponding to clamped, soft-simply supported, and free
portions of the shell boundary. We assume that points on the boundary where boundary condition changes type are included in the
finite element nodes.
We let
$\E_h=\E^0_h\cup\E^{\partial}_h$. For $\tau\in \T_h$, we use $h_\tau$ to denote its diameter, and for $e\in\E^h$, we use $h_e$ to denote its length.
The triangulation $\T_h$ is mapped to a curvilinear triangulation
$\t\T_h$ of the shell mid surface $\t\Omega$. We let $\t\Omega_h=\cup_{\t\tau\in\t\T_h}\t\tau$, and use $\tilde\E_h=\!Phi(\E_h)$ to denote the curvilinear
edges on the shell mid-surface $\t\Omega$ in a similar way. When we say a function is piecewise smooth we mean it is smooth on each $\tau\in\T_h$ or
on each $\t\tau\in\t\T_h$.

In analogue to the bilinear forms \eqref{bilinear-forms}, we define their discrete versions as follows.
For any discontinuous piecewise smooth vectors $\theta_\alpha$, $\phi_\alpha$,
$u_\alpha$, and $v_\alpha$, and
discontinuous piecewise scalars $w$ and $z$,
\begin{multline}\label{rho-h}
\rho_h(\!theta, \!u, w; \!phi,  \!v, z)=
\frac13\left\{\int_{\t\Omega_h}
a^{\alpha\beta\lambda\gamma}\rho_{\lambda\gamma}(\!theta, \!u, w)
\rho_{\alpha\beta}(\!phi, \!v, z)\right.\\
%\end{multline}
-\int_{\t\E^0_h}\left[a^{\alpha\beta\lambda\gamma}\lbrac\rho_{\lambda\gamma}(\!theta, \!u, w)\rbrac
\lbra\phi_\alpha\rbra_{n_\beta}
+a^{\alpha\beta\lambda\gamma}\lbrac\rho_{\lambda\gamma}(\!phi, \!v, z)\rbrac
\lbra\theta_\alpha\rbra_{n_\beta}\right]
\\
+\int_{\t\E^0_h}\left[a^{\alpha\beta\lambda\gamma}\lbrac\rho_{\lambda\gamma}(\!theta, \!u, w)\rbrac b^\delta_\alpha
\lbra v_\delta\rbra_{n_\beta}
+a^{\alpha\beta\lambda\gamma}\lbrac\rho_{\lambda\gamma}(\!phi, \!v, z)\rbrac b^\delta_\alpha
\lbra u_\delta\rbra_{n_\beta}\right]
\\
-\int_{\t\E^D_h}\left[a^{\alpha\beta\lambda\gamma}\rho_{\lambda\gamma}(\!theta, \!u, w)\phi_\alpha{n_\beta}
+a^{\alpha\beta\lambda\gamma}\rho_{\lambda\gamma}(\!phi, \!v, z)\theta_\alpha{n_\beta}\right]
\\
\left.+\int_{\t\E^{D\cup S}_h}\left[a^{\alpha\beta\lambda\gamma}\rho_{\lambda\gamma}(\!theta, \!u, w)b^\delta_\alpha
v_\delta{n_\beta}+a^{\alpha\beta\lambda\gamma}\rho_{\lambda\gamma}(\!phi, \!v, z)b^\delta_\alpha
u_\delta{n_\beta}\right]\right\}\\
+\C\sum_{e\in\E^0_h}
h^{-1}_e\int_{e}\left(\sum_{\alpha=1,2}\lbra \theta_{\alpha}\rbra \lbra \phi_{\alpha}\rbra\right)
+\C\sum_{e\in\E^{D}_h}
h^{-1}_e\int_{e}\left(\sum_{\alpha=1,2}\theta_{\alpha}\phi_{\alpha}\right)
\end{multline}

\begin{multline}\label{gamma-h}
\gamma_h(\!u, w; \!v, z)=
\int_{\t\Omega_h}a^{\alpha\beta\lambda\gamma}\gamma_{\lambda\gamma}(\!u, w)
\gamma_{\alpha\beta}(\!v,z)
\\
-\int_{\t\E^0_h}\left[
a^{\delta\beta\alpha\gamma}\lbrac\gamma_{\alpha\gamma}(\!u, w)\rbrac
\lbra v_\delta\rbra_{n_\beta}
+a^{\delta\beta\alpha\gamma}
\lbrac\gamma_{\alpha\gamma}(\!v, z)\rbrac\lbra u_\delta\rbra_{n_\beta}\right]\\
-\int_{\t\E^{D\cup S}_h}\left[
a^{\delta\beta\alpha\gamma}
\gamma_{\alpha\gamma}(\!u, w)
v_\delta{n_\beta}
+a^{\delta\beta\alpha\gamma}
\gamma_{\alpha\gamma}(\!v, z)
u_\delta{n_\beta}\right]\\
+\C\sum_{e\in\E^0_h}
h^{-1}_e\int_{e}\left(\sum_{\alpha=1,2}\lbra u_{\alpha}\rbra \lbra v_{\alpha}\rbra
+\lbra w\rbra \lbra z\rbra\right)
+\C\sum_{e\in\E^{D\cup S}_h}
h^{-1}_e\int_{e}\left(\sum_{\alpha=1,2}u_{\alpha}v_{\alpha}
+wz\right)
\end{multline}

\begin{multline}\label{tau-h}
\tau_h(\!theta, \!u, w; \!phi, \!v, z)=
\int_{\t\Omega_h}
\kappa\mu a^{\alpha\beta}\tau_\beta(\!theta, \!u, w)\tau_\alpha(\!phi, \!v, z)\\
-\int_{\t\E^0_h}\left[
\kappa\mu a^{\alpha\beta}\lbrac\tau_\beta(\!theta, \!u, w)\rbrac\lbra z\rbra_{n_\alpha}
+\kappa\mu a^{\alpha\beta}\lbrac\tau_\beta(\!phi, \!v, z)\rbrac\lbra w\rbra_{n_\alpha}\right]\\
-\int_{\t\E^{D\cup S}_h}\left[
\kappa\mu a^{\alpha\beta}\tau_\beta(\!theta, \!u, w)z{n_\alpha}
+\kappa\mu a^{\alpha\beta}\tau_\beta(\!phi, \!v, z)w{n_\alpha}\right]\\
+\C\sum_{e\in\E^0_h}
h^{-1}_e\int_{e}
\lbra w\rbra \lbra z\rbra
+\C\sum_{e\in\E^{D\cup S}_h}
h^{-1}_e\int_{e}wz
\end{multline}
In the above definitions, the integral on $\t\Omega_h$ is taken with respect to its area element.
An inter-element edge $\t e\in\t\E^0_h$ is shared by elements $\t\tau_1$ and $\t\tau_2$. Piecewise
functions may have different values on the two elements, and thus discontinuous on $\t e$. The notation
$\lbrac\rho_{\sigma\tau}(\!phi, \!v, z)\rbrac$ represents the average of values of  $\rho_{\sigma\tau}(\!phi, \!v, z)$
from the sides of $\t\tau_1$ and $\t\tau_2$.
Let $\!n_\delta=n_{\delta\alpha}\!a^\alpha$ be the unit outward normal to $\t e$ viewed as boundary of $\t\tau_\delta$.
We have $\!n_1+\!n_2=0$ and  $n_{1\alpha}+n_{2\alpha}=0$.
We use $\lbra u_{\alpha}\rbra_{n_{\lambda}}=(u_{\alpha})|_{\t\tau_1} n_{1\lambda}+(u_{\alpha})|_{\t\tau_2} n_{2\lambda}$ to denote the jump of $u_\alpha$ over the edge
$\t e$ with respect to $n_\lambda$,
and use $\lbra w\rbra_{n_{\lambda}}=w|_{\t\tau_1} n_{1\lambda}+w|_{\t\tau_2} n_{2\lambda}$ to denote the jump of $w$ over the edge
$\t e$ with respect to $n_\lambda$,
etc. Like covariant derivatives, the jump with respect to the normal vector of a lower order tensor is a higher order tensor.
In $\lbra u_{\alpha}\rbra_{n_{\lambda}}$, $\alpha$ and $\lambda$ are treated as the usual Greek subscripts in second order tensors.
On the boundary $\t\E^{D\cup S}_h$,  $\!n=n_\alpha\!a^\alpha$ is the unit outward in-surface normal to $\partial\t\Omega$.
The last lines in \eqref{rho-h}, \eqref{gamma-h}, and \eqref{tau-h} are the penalty terms in which
the jumps (without subscript) $\lbra\theta_\alpha\rbra$,  $\lbra u_\alpha\rbra$, and $\lbra w\rbra$  are the absolute values of the differences
in the values of $\theta_\alpha$, $u_\alpha$, and $w$ from the two sides of $e$, respectively.

Let $\tau\in\T_h$ be an element, which is mapped to $\t\tau=\!Phi(\tau)\in \t\T_h$.
Let $e$ be one of the edges of $\tau$. We denote the unit outward normal to $e$ by $\bar\!n=\bar n_\alpha\!e^\alpha$ with $\!e^\alpha$ being the basis vectors
of $\R^2$. The outward normal to $\t e=\!Phi(e)$ is $\!n=n_\alpha\!a^\alpha$.
In the above definitions,  the integral on  a curved edge $\t e\in\t\E_h$ is taken with respect to its arc length.
Let $x_\alpha(s)$ be the arc length parameterization
of $e$, then $\!Phi(x_\alpha(s))$ is a parameterization of $\t e$, but may not in terms of the arc length of $\t e$. The correct formula for a curved line integral is
%\begin{equation*}
$\int_{\t e}f=\int_ef(x_\alpha(s))\sqrt{\sum_{\gamma,\beta=1, 2}a_{\gamma\beta}\frac{dx_\gamma}{ds}\frac{dx_\beta}{ds}}ds$.
%\end{equation*}
%The line integrals in the right hand side of \eqref{form_f} need to be computed
%this way.
For a vector valued function $f^\alpha$, we have the simpler formula
\begin{equation*}
\int_{\t e}f^\alpha n_\alpha=\int_ef^\alpha \bar n_\alpha \sqrt a ds.
\end{equation*}
This formula shows how to write the line integrals on curved  edges in $\t\E_h$ in \eqref{form_a} and \eqref{form_b}
as line integrals on straight edges in $\E_h$. For example,
\begin{equation*}
\int_{\t e}\kappa\mu a^{\alpha\beta}\lbrac\tau_\beta(\!phi, \!v, z)\rbrac\lbra w\rbra_{n_\alpha}=
\int_{e}\kappa\mu a^{\alpha\beta}\lbrac\tau_\beta(\!phi, \!v, z)\rbrac\lbra w\rbra_{\bar n_\alpha}\sqrt a ds.
\end{equation*}
It is noted that $n_\alpha$ is usually not constantly valued on $\t e$ but $\bar n_\alpha$ is constant on the edge $e$, which is an important property in analysis of
the finite element models.
We need to repeatedly do integration by parts on the shell mid-surface, by using the Green's theorem on surfaces.
%\subsection{Green's theorem on surfaces and DG instrume$nts}
Let $\tau\subset\Omega$ be a subdomain, which is mapped to
the subregion $\tilde\tau\subset\t\Omega$ by $\!Phi$.
Let $\!n=n_{\alpha}\!a^{\alpha}$ %=n^\alpha\!a_\alpha$
be the unit outward normal to the boundary $\partial\tilde\tau=\!Phi(\partial\tau)$
which is tangent to the surface $\t\Omega$.
Let $\bar n_\alpha\!e^\alpha$ be the  unit outward normal vector to $\partial\tau$ in $\R^2$. 
The Green's theorem \cite{GZ} says that for a vector field $f^\alpha$,
\begin{equation}\label{Green}
\int_{\tilde\tau}f^{\alpha}|_{\alpha}=
%\int_{\tau}u^{\alpha}|_{\alpha}\sqrt{a}d\!x=
%\int_{\tau}(\sqrt a u^{\alpha})_{,\alpha}d\!x=
%\int_{\partial\tau}\sqrt a u^{\alpha}n^{\partial\tau}_{\alpha}ds=
\int_{\partial\tilde\tau}f^{\alpha}n_{\alpha}=
\int_{\partial\tau}f^{\alpha}\bar n_{\alpha}\sqrt a.
\end{equation}

We define the discrete version of the  bilinear form $a$ \eqref{bilinear-form-a} by
\begin{equation}\label{form_a}
a_h(\!theta, \!u, w; \!phi,  \!v, z)=\rho_h(\!theta, \!u, w; \!phi, \!v, z)+\gamma_h(\!u, w; \!v, z)+\tau_h(\!theta, \!u, w; \!phi, \!v, z).
\end{equation}

For continuous piecewise smooth symmetric tensor $\M$ and  vector $\!xi$, and discontinuous piecewise  vector $\!phi$,
discontinuous piecewise vector $\!v$, and discontinuous piecewise
scalar $z$, we define the bilinear form $b_h$ by
\begin{multline}\label{form_b}
b_h(\M,\!xi; \!phi, \!v,z)=
\int_{\t\Omega_h}\left[\M^{\alpha\beta}\gamma_{\alpha\beta}(\!v, z)+\xi^\alpha\tau_\alpha(\!phi, \!v, z)\right]\\
-\int_{\t\E^0_h}\left(\M^{\alpha\beta}\lbra v_{\alpha}\rbra_{n_{\beta}}+\xi^\alpha\lbra z\rbra_{n_\alpha}\right)
 -\int_{\t\E^{D\cup S}_h}\left(
\M^{\alpha\beta}{n_{\beta}}v_{\alpha}+\xi^\alpha n_\alpha z\right).
\end{multline}
For continuous piecewise tensors $\M$, $\N$ and continuous piecewise vectors $\!xi$, $\!eta$ we define the bilinear form $c_h$ by
\begin{multline}\label{form_c}
c_h(\M,\!xi; \N,\!eta)=\int_{\t\Omega_h}\left(a_{\alpha\beta\gamma\delta}\M^{\gamma\delta}\N^{\alpha\beta}
+\frac{1}{\kappa\mu}a_{\alpha\beta}\xi^\alpha\eta^\beta\right).\hfill
\end{multline}

%%%%%%%%%%%%%%%%%%%%%%%%%
%All these forms are well defined for piecewise functions that could be independently defined on each element of the triangulation $\T_h$.

The finite element model is defined on a space of piecewise polynomials.
On an element $\tau$, we let $\P^k(\tau)$ be the space
of polynomials of degree $k$. The finite element functions are mostly taken in $\P^1(\tau)$. For the displacements, we need to
enrich the space if $\tau$ has edges on the free boundary. Let the edges of $\tau$ be $e_i$. If $\tau$ has one edge ($e_1$) on $\E^F_h$, we need to add two
quadratic polynomials to $\P^1(\tau)$, which are  defined by
\begin{equation*}
p^e_1=\lambda_1p_1+1,\quad p^e_2=\lambda_1p_2+\lambda_2.
\end{equation*}
Here $\lambda_i$ are the barycentric coordinates with respect to the vertex opposite to the edge $e_i$,
and $p_1$ and $p_2$ are linear functions in $\P^1(\tau)$ determined by
\begin{equation}\label{P3*}
\int_{\t\tau}p^e_\alpha q=0\ \forall\ q\in \P^1(\tau).
\end{equation}
It is clear that $p^e_1$ and $p^e_2$ are in $\P^2(\tau)$ and they are orthogonal to $\P^1(\tau)$ in $L^2(\tau)$ with the inner product weighted by $\sqrt a$,
and are constant and linear on $e_1$, respectively. With these two functions added to $\P^1(\tau)$, we obtain a $5$ dimensional subspace of $\P^2(\tau)$, which
is denoted by $\P^e(\tau)$. A function in $\P^e(\tau)$ is uniquely determined by its $L^2$ projection in $\P^1(\tau)$ and its moments of degree $0$ and $1$
on the edge $e_1$, all weighted by $\sqrt a$.
If $\tau$ has two edges ($e_2$ and $e_3$) on $\E^F_h$. We need to add $4$ cubic polynomials to $\P^1(\tau)$ for the displacement
variables:
\begin{equation*}
p^v_1=\lambda_2\lambda_3p_1+\lambda_3,\quad p^v_2=\lambda_2\lambda_3p_2+\lambda^2_3,\quad
p^v_3=\lambda_2\lambda_3p_3+\lambda_2,\quad p^v_4=\lambda_2\lambda_3p_4+\lambda^2_2.
\end{equation*}
Here for $j\in\{1, 2, 3, 4\}$, $p_j$ is in $\P^1(\tau)$, and determined by the condition that $\int_{\t\tau}p^v_jq=0$ for all $q\in\P^1(\tau)$.
With $p^v_j$ added to $\P^1(\tau)$, we obtain a subspace of $\P^3(\tau)$, which is denoted by $\P^v(\tau)$.
A function in $\P^v(\tau)$ is uniquely determined by its $L^2$ projection in $\P^1(\tau)$ and its moments of degree $0$ and $1$
on the edges $e_2$ and $e_3$, all weighted by $\sqrt a$.

The finite element space is defined by
\begin{equation}\label{FE-space}
\begin{split}
\H_h=&\{(\!phi, \!v, z); \phi_\alpha \text{ is discontinuous on }\Omega\text{ and } \phi_\alpha\in \P^1(\tau)\ \forall\ \tau\in\T_h, \\
& v_\beta \text{ and } z\text{ are discontinuous and } v_\beta, z\in \P^1(\tau) \text { if }\partial\tau\cap\E^F_h=\emptyset,\\
& v_\beta, z\in \P^e(\tau) \text { if }\partial\tau\cap\E^F_h\text{ has one edge },\\
& v_\beta, z\in \P^v(\tau) \text { if }\partial\tau\cap\E^F_h\text{ has two edges}\},\\
\V_h=&\{(\N, \!eta);\  \N^{\alpha\beta} \text{ and }\eta^\gamma \text{ are continuous piecewise linear functions  on }\Omega
\}.\hfill
\end{split}
\end{equation}

%With $\vtheta=1$ in \eqref{form_a},
The finite element model for the shell in the bending dominated state is an analogue of
\eqref{N-P-model}. It seeks $(\!theta, \!u, w)\in \H_h$ and $(\M, \!xi)\in \V_h$ such that
\begin{equation}\label{N-fem}
\begin{gathered}
a_h(\!theta, \!u, w;\  \!phi, \!v, z)+b_h(\M, \!xi;\  \!phi, \!v, z)=\langle\!f;\!phi, \!v, z\rangle\ \ \forall\ (\!phi, \!v, z)\in \H_h, \\
b_h(\N, \!eta;\  \!theta, \!u, w)-\eps^2c_h(\M, \!xi;\  \N,\!eta)=0\ \ \forall\ (\N, \!eta)\in \V_h.
\end{gathered}
\end{equation}

The solution of the Naghdi  model \eqref{N-P-model} satisfies the equation \eqref{N-fem} when the test functions $\!phi$, $\!v$, $z$, $\N$, and $\!eta$
are arbitrary piecewise smooth functions, not necessarily polynomials. This says that the finite element model is consistent
with the shell model. The verification of consistency is a matter of repeatedly applying the Green's theorem \eqref{Green}.
This equation is in the form of \eqref{isomorphism-abs}. We shall define the norms in $\H_h$ and $\V_h$ later,
in which we prove that the finite element model \eqref{N-fem} is well posed
if the penalty constant $\C$ in \eqref{form_a} is sufficiently large, by verifying the conditions \eqref{isomorphism-condition}.
This penalty constant $\C$ could be dependent on the shell geometry and the shape regularity $\K$ of
the triangulation $\T_h$. It is, otherwise, independent of the triangulation $\T_h$ and the shell thickness.
One may simply replace $\P^e(\tau)$  by the richer $\P^2(\tau)$, and replace $\P^v(\tau)$ by $\P^3(\tau)$.
This would slightly increase the complexity, but not affect the stability or accuracy of the finite element method.

For intermediate and membrane/shear dominated shells, the finite element model is an analogue of
the original model \eqref{N-0-model}.
We need to remove the enrichment of finite element functions for the displacement variables of elements
having edges on the free boundary. Such  enrichment's would not help but thwart the stability of the finite element model.
In the definition of $\H_h$, we replace $\P^e(\tau)$ and $\P^v(\tau)$ by $\P^1(\tau)$. Let $\ub\H_h$ be the reduced finite element space
of discontinuous piecewise linear functions.
The finite element model seeks
$(\!theta, \!u, w)\in \ub\H_h$ such that
\begin{multline}\label{N-0-fem}
a_h(\!theta, \!u, w;\  \!phi, \!v, z)+
\eps^{-2}\left[\gamma_h(\!u, w;\!v, z)+\tau_h(\!theta, \!u, w; \!phi, \!v, z)\right]\\
=\langle\!f;\!phi, \!v, z\rangle\ \ \forall\ (\!phi, \!v, z)\in \ub\H_h.
\end{multline}
The problem with this model is about its well-posedness, which
can not be guaranteed
by taking a sufficiently big penalty constant $\C$ in \eqref{rho-h}, \eqref{gamma-h}, and \eqref{tau-h}. We shall prove
the continuity and coerciveness of the bilinear form in \eqref{N-0-fem} with respect to the total energy norm in the finite element space $\ub\H_h$
when the mesh satisfies the condition \eqref{m-stability-condition} below.
This model is consistent with the shell model \eqref{N-0-model}, whose verification is similar to that of the model \eqref{N-fem}.

Note that there is no boundary condition enforced on functions in the spaces $\ub\H_h$, $\H_h$, or $\V_h$.
The displacement and rotation boundary conditions are enforced by boundary penalty in a consistent manner, which is Nitsche's method
\cite{A-DG}. This is necessary for the stability when the shell is bending dominated, while not necessary for membrane/shear dominated and intermediate shells.
%It may be useful in reducing the numerical oscillation near the boundary for the latter.

In implementing the methods, we include a parameter $\vtheta$ in the definition \eqref{form_a} of the bilinear form $a_h$, and let
\begin{equation}\label{form_a_vtheta}
a^{\vtheta}_h(\!theta, \!u, w; \!phi,  \!v, z)=\rho_h(\!theta, \!u, w; \!phi, \!v, z)+\vtheta\gamma_h(\!u, w; \!v, z)+\vtheta\tau_h(\!theta, \!u, w; \!phi, \!v, z).
\end{equation}
We arrange the order of the finite element degree of freedoms as follows.

1. The degree of freedoms of the primary variables $\!theta, \!u, w$
in $\ub\H_h$ without the enrichment for $\!u$ and $w$ on elements having edges on $\E^F_h$.

2. The enriched degree of freedoms for
$\!u$ and $w$ in $\H_h\setminus \ub\H_h$.

3. The degree of freedoms of the auxiliary variables  $\M$ and $\!xi$ in $\V_h$.

A program for the method \eqref{N-fem} then is obtained by setting $\vtheta=1$ in \eqref{form_a_vtheta}.
A program for the method \eqref{N-0-fem} is obtained by taking $\vtheta=\veps^{-2}$, and
cutting out the left-upper sub-matrix of the stiffness matrix corresponding to the primary variables without the enrichment.

\section{A discrete Korn's inequality for Naghdi shell}
\label{KornOnShell}
To analyze the well posedness and stability of the finite element models \eqref{N-fem} and \eqref{N-0-fem},
we need to have a Korn type inequality.
Let $H^1_h$ be the space of piecewise $H^1$ functions in which a function is independently defined on each element $\tau$ of $\T_h$,
and belong to $H^1(\tau)$. For functions $\theta_\alpha$, $u_\alpha$, and $w$ in $H^1_h$, we defined the following norms and seminorms.
\begin{multline}\label{rho-h-norm}
\|(\!theta,\!u, w)\|^2_{\rho_h}:=
\sum_{\tau\in\T_h}\sum_{\alpha,\beta=1,2}\|\rho_{\alpha\beta}(\!theta, \!u, w)\|^2_{0,\tau}\\
+\sum_{e\in \E^0_h}h^{-1}_e
\int_{e}\sum_{\alpha=1,2}\lbra \theta_\alpha\rbra^2
+\sum_{e\in\E^D_h}h^{-1}_e\int_{e}\sum_{\alpha=1,2}\theta_\alpha^2.
\end{multline}

\begin{multline}\label{gamma-h-norm}
\|(\!u, w)\|^2_{\gamma_h}:=
\sum_{\tau\in\T_h}\sum_{\alpha,\beta=1,2}\|\gamma_{\alpha\beta}(\!u, w)\|^2_{0,\tau}
\\
+\sum_{e\in \E^0_h}h^{-1}_e
\int_{e}\sum_{\alpha=1,2}\lbra u_\alpha\rbra^2
+
\sum_{e\in\E^{S}_h\cup\E^D_h}
h^{-1}_e\int_{e}\sum_{\alpha=1,2}u^2_{\alpha}.
\end{multline}

\begin{multline}\label{tau-h-norm}
\|(\!theta,\!u, w)\|^2_{\tau_h}:=
\sum_{\tau\in\T_h}\sum_{\alpha=1,2}\|\tau_\alpha(\!theta, \!u, w)\|^2_{0,\tau}
+\sum_{e\in \E^0_h}h^{-1}_e\int_{e}\lbra w\rbra^2
+\sum_{e\in\E^{S}_h\cup\E^D_h}
h^{-1}_e\int_{e}w^2.\hfill
\end{multline}

\begin{multline}\label{ah-norm}
\|(\!theta,\!u, w)\|^2_{a_h}:=\|(\!theta,\!u, w)\|^2_{\rho_h}+\|(\!u, w)\|^2_{\gamma_h}+\|(\!theta,\!u, w)\|^2_{\tau_h}.\hfill
\end{multline}

\begin{multline}\label{Hh-norm}
\|(\!theta, \!u, w)\|^2_{\H_h}:=\sum_{\tau\in\T_h}\left[\sum_{\alpha=1,2}\left(\|\theta_\alpha\|^2_{1,\tau}+\|u_\alpha\|^2_{1,\tau}\right)+\|w\|^2_{1,\tau}\right]\\
+\sum_{e\in \E^0_h}h^{-1}_e
\int_{e}\left[\sum_{\alpha=1,2}\left(\lbra \theta_\alpha\rbra^2+\lbra u_\alpha\rbra^2\right)+
\lbra w\rbra^2\right]\\
+
\sum_{e\in\E^{S}_h\cup\E^D_h}
h^{-1}_e\int_{e}\left(\sum_{\alpha=1,2}u^2_{\alpha}+w^2\right)
+\sum_{e\in\E^D_h}
h^{-1}_e\int_{e}\sum_{\alpha=1,2}\theta_\alpha^2.
\end{multline}

We then have the equivalence that
\begin{equation}\label{Hh-ah-equiv}
\|(\!theta, \!u, w)\|_{a_h}\simeq\|(\!theta,\!u, w)\|_{\H_h}\ \forall\ \theta_\alpha, u_\beta, , w \in H^1_h.
\end{equation}
The equivalence constant $C$ could be dependent
on the shell midsurface and shape regularity $\K$ of the triangulation $\T_h$, but otherwise independent of the triangulation.
A  proof of this result can be found in \cite{Korn-shell}.

\section{Analysis of the mixed method for bending dominated shells}
\label{BendingErrorAnalysis}
%\subsection{Error analysis for bending dominated shells}
The finite element model defined by \eqref{N-fem} fits in the form of the mixed equation
\eqref{isomorphism-abs}. We verify the conditions \eqref{isomorphism-condition} for the bilinear forms
defined by \eqref{form_a}, \eqref{form_b}, and \eqref{form_c}, with the space defined by \eqref{FE-space},
in which the $\H_h$ norm is defined by \eqref{Hh-norm}. We define the $\V_h$ norm by
\begin{equation}\label{Vh-norm}
\|(\N, \!eta)\|_{\V_h}:=\left(\sum_{\alpha,\beta=1,2}\|\N^{\alpha\beta}\|^2_{0,\Omega}+ \sum_{\alpha=1,2}\|\eta^\alpha\|^2_{0,\Omega}\right)^{1/2}.
\end{equation}
We show that there is a constant $C$ that depends on the shell geometry and shape regularity $\K$ of the triangulation $\T_h$, but otherwise, independent of the
triangulation such that
%\begin{equation}\label{fem-condition}
\begin{align} %gathered}
|a_h(\!theta, \!u, w; \!phi, \!v, z)| &\le C\|(\!theta, \!u, w)\|_{\H_h}\|(\!phi, \!v, z)\|_{\H_h}&\  &\forall\ (\!theta, \!u, w), (\!phi, \!v, z)\in \H_h,   \label{a-condition1}      \\
\|(\!phi, \!v, z)\|_{\H_h}^2& \le Ca_h(\!phi, \!v, z; \!phi, \!v, z)&\  &\forall\  (\!phi, \!v, z)\in \H_h,\label{a-condition2} \\
|b_h(\N, \!eta; \!phi, \!v, z)| &\le C\|(\!phi, \!v, z)\|_{\H_h}\|(\N, \!eta)\|_{\V_h}&\  &\forall\ (\!phi, \!v, z)\in \H_h, (\N, \!eta) \in \V_h,\label{b-condition}\\
|c_h(\M, \!xi;  \N, \!eta)|&\le C\|(\M, \!xi)\|_{\V_h}\|(\N, \!eta)\|_{\V_h}&\  &\forall\ (\M, \!xi), (\N, \!eta) \in \V_h,\label{c-condition1}\\
\|(\N, \!eta)\|_{\V_h}^2&\le Cc_h(\N, \!eta;  \N, \!eta)&\  &\forall\ (\N, \!eta) \in \V_h. \label{c-condition2}
\end{align} %gathered}
%\end{equation}
We start with \eqref{a-condition1}.
In expression \eqref{form_a} of the bilinear form $a_h$,
%using the property of the elastic tensor \eqref{elastic-tensor-equiv},
the integrals on $\t\Omega_h$ are easily bounded as follows.
\begin{multline*}
\int_{\t\Omega_h}\left|\left[a^{\alpha\beta\lambda\gamma}
\rho_{\lambda\gamma}(\!theta, \!u, w)\rho_{\alpha\beta}(\!phi, \!v, z)
+a^{\alpha\beta\lambda\gamma}
\gamma_{\lambda\gamma}(\!u, w)\gamma_{\alpha\beta}(\!v, z)
+a^{\alpha\beta}\tau_\alpha(\!theta, \!u, w)\tau_{\beta}(\!phi, \!v, z)\right]\right|\\
\lesssim\|(\!theta, \!u, w)\|_{a_h}\|(\!phi, \!v, z)\|_{a_h}
\lesssim\|(\!theta, \!u, w)\|_{\H_h}\|(\!phi, \!v, z)\|_{\H_h}.
\end{multline*}
In the expression \eqref{form_a}, there are many integrals on edges in $\t\E^0_h$, $\t\E^D$, and $\t\E^S$.
To estimate these, we need a trace theorem.
Let $\tau$ be a triangle, and $e$ one of its edges. Then there is a
constant $C$ depending on the shape regularity of $\tau$ such that
\begin{equation}\label{trace}
\int_eu^2\le C\left[h_e^{-1}\int_{\tau}u^2+\sum_{\alpha=1,2}h_e\int_{\tau}|\partial_\alpha u|^2\right]
\ \ \forall\ u\in H^1(\tau).
\end{equation}
This can be found in \cite{A-DG}.
%We then estimate the integrals on $\t\E^0_h$.
%second line in \eqref{form_a}. We take on the first term
%$\int_{\t\E^0_h}a^{\alpha\beta\lambda\gamma}\lbrac\rho_{\lambda\gamma}(\!phi, \!v, z)\rbrac\lbra\theta_\alpha\rbra_{n_\beta}$
Let $e\in\E^0_h$ be one of the interior edges shared by elements $\tau_1$ and $\tau_2$. We consider the estimate on a typical term.
Using %the elastic tensor property \eqref{elastic-tensor-equiv},
the H\"older inequality, we get
\begin{multline*}
\left|\int_{\t e}
a^{\alpha\beta\lambda\gamma}\lbrac\rho_{\lambda\gamma}(\!phi, \!v, z)\rbrac\lbra\theta_\alpha\rbra_{n_\beta}\right|\lesssim
\left[\sum_{\lambda, \gamma=1,2}h_e\int_e\lbrac\rho_{\lambda\gamma}(\!phi, \!v, z)\rbrac^2
\right]^{1/2}
\left[h^{-1}_e\sum_{\alpha=1,2}\int_e\lbra \theta_\alpha\rbra^2
\right]^{1/2}.
\end{multline*}
Using the trace inequality \eqref{trace},  the formula \eqref{N-bending} and \eqref{covariant-derivative}, and
the inverse inequality for finite element functions, we have
\begin{multline*}
h_e\int_e\lbrac\rho_{\lambda\gamma}(\!phi, \!v, z)\rbrac^2
\lesssim
\sum_{\beta, \delta=1,2}\left(\int_{\tau_\delta}|\rho_{\lambda\gamma}(\!phi, \!v, z)|^2+
h^2_{\tau_\delta}\int_{\tau_\delta}|\partial_\beta\rho_{\lambda\gamma}(\!phi, \!v, z)|^2\right)\\
\lesssim
\sum_{\delta=1,2}\left[
\|\!phi\|^2_{1,\tau_\delta}+\|\!v\|^2_{1,\tau_\delta}+\|z\|^2_{0,\tau_\delta}+
h^2_{\tau_\delta}
\left(\|\!phi\|^2_{2,\tau_\delta}+\|\!v\|^2_{2,\tau_\delta}+\|z\|^2_{1,\tau_\delta}\right)
\right]
\\
\lesssim
\sum_{\delta=1,2}\left(
\|\!phi\|^2_{1,\tau_\delta}+\|\!v\|^2_{1,\tau_\delta}+\|z\|^2_{0,\tau_\delta}
\right).
\end{multline*}
Summing the above estimates for all $e\in\E^0_h$, and using Cauchy--Schwarz inequality, we get
\begin{multline}\label{a-5-est}
\left|\int_{\t\E^0_h}a^{\alpha\beta\lambda\gamma}\lbrac\rho_{\lambda\gamma}(\!phi, \!v, z)\rbrac\lbra\theta_\alpha\rbra_{n_\beta}\right|\\
\lesssim
\left[\sum_{\tau\in\T_h}\left(\|\!phi\|^2_{1,\tau}+\|\!v\|^2_{1,\tau}+\|z\|^2_{0,\tau}\right)\right]^{1/2}
\left(\sum_{e\in\E^0_h}h^{-1}_e\sum_{\alpha=1,2}\int_e\lbra \theta_\alpha\rbra^2\right)^{1/2}.
\end{multline}
All the other terms in the expression  \eqref{form_a} of the bilinear form of $a_h$  can be estimated similarly.
Estimates on the additional penalty terms in the expression \eqref{form_a}
can be obtained by using the Cauchy--Schwarz again.
This completes the proof of \eqref{a-condition1}.

Next, we consider \eqref{a-condition2}.
Let $B(\!phi, \!v, z; \!phi, \!v, z)$ be
the sum of all edge integrals in the expression \eqref{form_a} of $a_h(\!phi, \!v, z; \!phi, \!v, z)$, except the penalty terms involving the penalty
coefficient $\C$.
In view of the equivalence \eqref{Hh-ah-equiv}, there are constants $C_1$ and $C_3$ that depend on the shell mid-surface
and $\K$, and $C_2$ that depend on the penalty constant $\C$ in \eqref{form_a} such that
\begin{multline*}
a_h(\!phi, \!v, z; \!phi, \!v, z)\ge
C_1\|(\!phi, \!v, z)\|^2_{\H_h}\\
+C_2
\left[\sum_{e\in \E^0_h}\left(
\sum_{\alpha=1,2}h^{-1}_e\int_{e}\lbra v_\alpha\rbra^2+\sum_{\alpha=1, 2}
h^{-1}_e\int_{e}\lbra \phi_\alpha\rbra^2
+
h^{-1}_e\int_{e}\lbra z\rbra^2\right)\right.\\
\left.
+
\sum_{e\in\E^{S}_h\cup\E^D_h}
\left(\sum_{\alpha=1,2}h^{-1}_e\int_{e}v^2_{\alpha}+h^{-1}_e\int_{e}z^2\right)
+\sum_{e\in\E^D_h}
\sum_{\alpha=1,2}h^{-1}_e\int_{e}\phi_\alpha^2\right]
-
C_3|B(\!phi, \!v, z; \!phi, \!v, z)|
\end{multline*}
Using the same argument as what used in deriving \eqref{a-5-est}, we have an upper bound that
\begin{multline*}
|B(\!phi, \!v, z; \!phi, \!v, z)|\\
\lesssim
\|(\!phi, \!v, z)\|_{\H_h}
\left[\sum_{e\in \E^0_h}\left(\sum_{\alpha=1,2}
h^{-1}_e\int_{e}\lbra v_\alpha\rbra^2+\sum_{\alpha=1, 2}
h^{-1}_e\int_{e}\lbra \phi_\alpha\rbra^2
+
h^{-1}_e\int_{e}\lbra z\rbra^2\right)\right.\\
\left.
+
\sum_{e\in\E^{S}_h\cup\E^D_h}
\left(\sum_{\alpha=1,2}h^{-1}_e\int_{e}v^2_{\alpha}+h^{-1}_e\int_{e}z^2\right)
+\sum_{e\in\E^D_h}
\sum_{\alpha=1,2}h^{-1}_e\int_{e}\phi_\alpha^2
\right]^{1/2}.
\end{multline*}
It follows from Cauchy--Schwarz inequality that when the penalty constant $\C$ in \eqref{form_a} is sufficiently big (which makes
$C_2$ sufficiently big)
there is a $C$ such that \eqref{a-condition2} holds.

To see the the continuity \eqref{b-condition} of the bilinear form $b_h$, we only need to look at the second term
in the right hand side of \eqref{form_b}. For an $e\in\E^0_h$ shared by $\tau_1$ and $\tau_2$, we have
\begin{multline*}
\left|\int_e\lbrac\N^{\alpha\beta}\rbrac\lbra v_{\alpha}\rbra_{n_{\beta}}\right|
\lesssim
\left[\sum_{\alpha,\beta=1,2}h_e\int_e(\N^{\alpha\beta})^2
\right]^{1/2}
\left[\sum_{\alpha=1,2}h^{-1}_e\int_e\lbra v_{\alpha}\rbra^2
\right]^{1/2}
\\
\hfill \lesssim
\left[\sum_{\alpha,\beta,\delta=1,2}(|\N^{\alpha\beta}|^2_{0,\tau_\delta}+h^2_{\tau_\delta}|\N^{\alpha\beta}|^2_{1,\tau_\delta})
\right]^{1/2}
\left[\sum_{\alpha=1,2}h^{-1}_e\int_e\lbra v_{\alpha}\rbra^2
\right]^{1/2}.
\end{multline*}
Similarly,
\begin{multline*}
\left|\int_e\lbrac\eta^\alpha\rbrac\lbra z\rbra_{n_\alpha}\right|
\lesssim
\left[\sum_{\alpha,\delta=1,2}(|\eta^{\alpha}|^2_{0,\tau_\delta}+h^2_{\tau_\delta}|\eta^{\alpha}|^2_{1,\tau_\delta})
\right]^{1/2}
\left[h^{-1}_e\int_e\lbra z\rbra^2
\right]^{1/2}.\hfill
\end{multline*}
Summing up these estimates for all $e\in\E^0_h$, and using inverse inequality to the finite element functions $\N$ and $\!eta$, we
obtain
\begin{equation*}
\left|\int_{\t\E^0_h}\left(\lbrac\N^{\alpha\beta}\rbrac\lbra v_{\alpha}\rbra_{n_{\beta}}+\lbrac\eta^\alpha\rbrac\lbra z\rbra_{n_\alpha}\right)\right|
\lesssim
\|(\N, \!eta)\|_{\V_h}\|(\!phi, \!v, z)\|_{\H_h}.
\end{equation*}
The conditions \eqref{c-condition1} and \eqref{c-condition2} are trivial. % consequences of \eqref{compliance-tensor-equiv}.
In view of the theory of Section~\ref{mixed-equivalence}, when the penalty constant $\C$ in \eqref{rho-h}, \eqref{gamma-h}, and \eqref{tau-h}  is sufficiently big,
the finite element model \eqref{N-fem} has a unique solution
in the finite element space \eqref{FE-space}.

Corresponding to the weak norm \eqref{isomorphism-weak}, we define a weaker (semi) $\overline\V_h$ norm for finite element function in $\V_h$ by
\begin{equation}\label{isomorphism-weak-N}
|(\N, \!eta)|_{\overline\V_h}:=\sup_{(\!phi, \!v, z)\in \H_h}\frac{b(\N, \!eta; \!phi, \!v, z)}{\|(\!phi, \!v, z)\|_{\H_h}}\ \ \forall\ (\N, \!eta) \in \V_h.
\end{equation}
We are now in a situation in  which the equivalence estimate \eqref{isomorphism-equiv} is applicable, from which we get the equivalence
\begin{multline*}
\|(\!theta, \!u, w)\|_{\H_h}+|(\M, \!xi)|_{\overline\V_h}+\eps\|(\M, \!xi)\|_{\V_h}\\
\simeq
\sup_{(\!phi, \!v, z)\in\H_h, (\N, \!eta)\in\V_h}
\frac{
\left[
\begin{gathered}
a_h(\!theta, \!u, w; \!phi, \!v, z)+b_h(\M, \!xi; \!phi, \!v, z)\\
-b_h(\N, \!eta; \!theta, \!u, w)+\eps^2
c_h(\M, \!xi; \N, \!eta)
\end{gathered}
\right]
} {\|(\!phi, \!v, z)\|_{\H_h}+|(\N, \!eta)|_{\overline\V_h}+\eps\|(\N, \!eta)\|_{\V_h}}\\
 \ \forall\
(\!theta, \!u, w)\in\H_h, (\M, \!xi)\in\V_h.
\end{multline*}
Let $\!theta\e, \!u\e, w\e,\M\e, \!xi\e$ be the solution to the mixed formulation of the Naghdi  model \eqref{N-P-model}, let
$\!theta^h, \!u^h, w^h, \M^h, \!xi^h$ be the finite element solution to the finite element model \eqref{N-fem},
and let $\!theta^I, \!u^I, w^I, \M^I, \!xi^I$ be an interpolation to the Naghdi  model solution from the finite element space.
Since the finite element method \eqref{N-fem} and the Naghdi model \eqref{N-P-model}
are consistent, we have
\begin{multline}\label{error-fraction}
\|(\!theta^h-\!theta^I, \!u^h-\!u^I, w^h-w^I)\|_{\H_h}+\\
|(\M^h-\M^I, \!xi^h-\!xi^I)|_{\overline\V_h}+\eps\|(\M^h-\M^I, \!xi^h-\!xi^I)\|_{\V_h}
\simeq
%\sup_{
%(\!phi, \!v, z)\in\H_h, (\N,\!eta)\in\V_h}
%\frac{\left[\begin{gathered}
%a(\!theta^h-\!theta^I, \!u^h-\!u^I, w^h-w^I; \!phi, \!v, z)+b(\M^h-\M^I, \!xi^h-\!xi^I; \!phi, \!v, z)\\
%-b(\N, \!eta; \!theta^h-\!theta^I, \!u^h-\!u^I, w^h-w^I)+\eps^2
%c(\M^h-\M^I, \!xi^h-\!xi^I, \N, \!eta)\end{gathered}
%\right]} {\|(\!phi, \!v, z)\|_{\H_h}+|(\N, \!eta)|_{\overline\V_h}+\eps\|(\N, \!eta)\|_{\V_h}}\\
%=
\sup_{
%\begin{gathered}
(\!phi, \!v, z)\in\H_h,
(\N,\!eta)\in\V_h
%\end{gathered}
}\\
\frac{\left[\begin{gathered}
a_h(\!theta\e-\!theta^I, \!u\e-\!u^I, w\e-w^I; \!phi, \!v, z)+b_h(\M\e-\M^I, \!xi\e-\!xi^I; \!phi, \!v, z)\\
-b_h(\N, \!eta; \!theta\e-\!theta^I, \!u\e-\!u^I, w\e-w^I)+\eps^2
c_h(\M\e-\M^I, \!xi\e-\!xi^I, \N, \!eta)\end{gathered}
\right]} {\|(\!phi, \!v, z)\|_{\H_h}+|(\N, \!eta)|_{\overline\V_h}+\eps\|(\N, \!eta)\|_{\V_h}}.
\end{multline}
We need to estimate the four terms in the numerator of last line above. For the first two terms we have the following estimates.
%\begin{lem}\label{a-error-lem}
\begin{multline}\label{a-error}
\left|a_h(\!theta\e-\!theta^I, \!u\e-\!u^I, w\e-w^I;\!phi, \!v, z)\right|
\\
\lesssim
\left\{
\sum_{\tau\in\T_h}\left[
\sum_{k=0}^2h^{2k-2}_\tau\left( \sum_{\alpha=1,2}|\theta\e_\alpha-\theta^I_\alpha|^2_{k,\tau}+\sum_{\alpha=1,2}|u\e_\alpha-u^I_\alpha|^2_{k,\tau}
+|w\e-w^I|_{k,\tau}
\right)\right]
\right\}^{1/2}\\
\|(\!phi, \!v, z)\|_{\H_h}\ \forall\ (\!phi, \!v, z)\in \H_h.
\end{multline}
%\end{lem}
%%
%%
%\begin{lem}\label{b2-error-lem}
\begin{multline}\label{b2-error}
\left|b_h(\M\e-\M^I, \!xi\e-\!xi^I; \!phi, \!v,z)\right|\\
\lesssim
\left\{\sum_{\tau\in\T_h}
\left[\sum_{\alpha,\beta=1,2}\left(|\M^{\eps\alpha\beta}-\M^{I \alpha\beta}|^2_{0, \tau}+h^2_{\tau}|\M^{\eps\alpha\beta}-\M^{I\alpha\beta}|^2_{1, \tau}\right)\right.\right.\\
\left.\left.
+
\sum_{\alpha=1,2}\left(|\xi^{\eps\alpha}-\xi^{I \alpha}|^2_{0, \tau}+h^2_{\tau}|\xi^{\eps\alpha}-\xi^{I\alpha}|^2_{1, \tau}
\right)\right]
\right\}^{1/2}
\|(\!phi, \!v, z)\|_{\H_h}\ \forall\ (\!phi, \!v, z)\in\H_h.
\end{multline}
%\end{lem}
The proof of these estimates  is a matter of repeatedly using the trace theorem \eqref{trace} and Cauchy--Schwarz inequality.
In either  the inequality \eqref{a-error} or  \eqref{b2-error}, we did not
impose any condition for the interpolations $\!theta^I$, $\!u^I$, $w^I$, $\M^I$, and $\!xi^I$,  except that they
are finite element functions from the space \eqref{FE-space}. The next estimate is very different in that
the interpolation needs to be  particularly chosen to obtain a desirable bound for the third term in the numerator of the right hand side of \eqref{error-fraction}.
In view of \eqref{form_b}, we have
\begin{multline}\label{b-original}
b_h(\N, \!eta; \!theta\e-\!theta^I, \!u\e-\!u^I, w\e-w^I)\\
=
\int_{\t\Omega_h}\left[\N^{\alpha\beta}\gamma_{\alpha\beta}(\!u\e-\!u^I, w\e-w^I)+\eta^\alpha\tau_\alpha(\!theta\e-\!theta^I, \!u\e-\!u^I, w\e-w^I)\right]\\
-\int_{\t\E^0_h}\left(\lbrac\N^{\alpha\beta}\rbrac\lbra u\e_{\alpha}-u^I_\alpha\rbra_{n_{\beta}}+\lbrac\eta^\alpha\rbrac\lbra w\e-w^I\rbra_{n_\alpha}\right)
\\ -\int_{\t\E^{S}_h\cup\t\E^D_h}
\left[\N^{\alpha\beta}{n_{\beta}}(u\e_{\alpha}-u^I_\alpha)+\eta^\alpha n_\alpha (w\e-w^I)\right].
\end{multline}

% Interpolations %%%%%%%%%%%%%%%%%
For $\theta\e_\alpha$, on any $\tau\in\T_h$,
we define $\theta^I_\alpha\in \P^1(\tau)$ by the weighted $L^2(\tau)$ projection such that
\begin{equation}\label{thetaI}
\int_{\t\tau}(\theta\e_\alpha-\theta^I_\alpha)p=0\ \forall\ p\in \P^1(\tau).
\end{equation}

For $u\e_\alpha$ and $w\e$ , on a $\tau\in\T_h$, if $\partial\tau\cap\E^F_h=\emptyset$, we define $u^I_\alpha$ and $w^I$ in $\P^1(\tau)$ by
the weighted $L^2(\tau)$ projection such that
\begin{equation}\label{uwI-interior}
\int_{\t\tau}(u\e_\alpha-u^I_\alpha)p=0, \quad
\int_{\t\tau}(w\e-w^I)p=0\ \forall\ p\in P^1(\tau).
\end{equation}
If $\partial\tau\cap\E^F_h$ has one edge $e$, we define $u^I_\alpha$ and $w^I$ in $P^e(\tau)$ by
\begin{equation}\label{uwI-edge1}
\begin{gathered}
\int_{\t\tau}(u\e_\alpha-u^I_\alpha)p=0, \quad
\int_{\t\tau}(w\e-w^I)p=0\ \forall\ p\in \P^1(\tau), \\
\int_e(u\e_\alpha-u^I_\alpha) p\sqrt a=0, \quad
\int_e(w\e-w^I) p\sqrt a=0
\ \forall\ p\in \P^1(e).
\end{gathered}
\end{equation}
If $\partial\tau\cap\E^F_h$ has two edges $e_\beta$, we define $u^I_\alpha$ and $w^I$ in $P^v(\tau)$ by
\begin{equation}\label{uwI-edge2}
\begin{gathered}
\int_{\t\tau}(u\e_\alpha-u^I_\alpha)p=0, \quad \int_{\t\tau}(w\e-w^I)p=0
\ \forall\ p\in \P^2(\tau), \\
\int_{e_\beta}(u\e_\alpha-u^I_\alpha) p\sqrt a=0,\quad
\int_{e_\beta}(w\e-w^I) p\sqrt a=0
\ \forall\ p\in P^1(e_\beta).
\end{gathered}
\end{equation}
The unisolvences of \eqref{thetaI} and \eqref{uwI-interior} are trivial. The unisolvences of
\eqref{uwI-edge1} and \eqref{uwI-edge2}
are seen from the definition of the space $\P^e(\tau)$ and $\P^v(\tau)$.
%is seen from the condition
%\eqref{P3*}. To see the unisolvence of \eqref{uwI-edge2}, one uses Appell's polynomial \cite{Braess}
%to decompose
%a cubic polynomial as the sum of a quadratic and an orthogonal complement, and uses the fact that
%the orthogonal complement is uniquely determined by its averages and first moments on two edges.

\begin{lem}\label{b3-error-lem}
With the  interpolations defined by \eqref{thetaI} to \eqref{uwI-edge2},  we have
\begin{multline}\label{b3-error}
|b(\N, \!eta; \!theta\e-\!theta^I,\!u\e-\!u^I,w\e-w^I)|\\
\lesssim
\max_{\tau\in\T_h}h^2_\tau\left[\sum_{\alpha,\beta,\lambda=1,2}|\Gamma^{\lambda}_{\alpha\beta}|_{1,\infty,\tau}+
\sum_{\alpha,\beta=1,2}\left(|b_{\alpha\beta}|_{1,\infty,\tau}+|b^\beta_\alpha|_{1,\infty,\tau}\right)\right]\\
\|(\N, \!eta)\|_{\V_h}
\left[\sum_{\tau\in\T_h}h^{-2}_{\tau}
\left(\left|u\e_{\alpha}-u^I_{\alpha}\right|^2_{0,\tau}+
\left|w\e-w^I\right|^2_{0,\tau}\right)
\right]^{1/2}\ \ \forall\ (\N, \!eta)\in\V_h.
\end{multline}
\end{lem}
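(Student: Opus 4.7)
The plan is to use element-by-element integration by parts to move all derivatives off the interpolation errors $\!u^*=\!u\e-\!u^I$, $w^*=w\e-w^I$, and $\!theta^*=\!theta\e-\!theta^I$ onto the smooth, continuous finite element stress pair $(\N,\!eta)$, and then exploit the moment conditions \eqref{thetaI}--\eqref{uwI-edge2} to absorb the dominant polynomial parts of the resulting integrands.

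More concretely, on each $\t\tau$ I will invoke Green's theorem \eqref{Green} together with the covariant product rule to write
\begin{equation*}
\int_{\t\tau}\N^{\alpha\beta}\gamma_{\alpha\beta}(\!u^*,w^*)
=\int_{\partial\t\tau}\N^{\alpha\beta}u^*_{\alpha}n_{\beta}
-\int_{\t\tau}\N^{\alpha\beta}|_{\beta}u^*_{\alpha}
-\int_{\t\tau}\N^{\alpha\beta}b_{\alpha\beta}w^*,
\end{equation*}
and similarly
$\int_{\t\tau}\eta^{\alpha}\tau_{\alpha}(\!theta^*,\!u^*,w^*)
=\int_{\partial\t\tau}\eta^{\alpha}w^*n_{\alpha}-\int_{\t\tau}\eta^{\alpha}|_{\alpha}w^*+\int_{\t\tau}\eta^{\alpha}b^{\gamma}_{\alpha}u^*_{\gamma}+\int_{\t\tau}\eta^{\alpha}\theta^*_{\alpha}$. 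Summing over $\tau\in\T_h$, the $\partial\t\tau$ contributions split into interior and boundary edge integrals. Since $(\N,\!eta)\in\V_h$ is continuous, the interior contributions collapse to $\int_{\t\E^0_h}(\N^{\alpha\beta}\lbra u^*_{\alpha}\rbra_{n_{\beta}}+\eta^{\alpha}\lbra w^*\rbra_{n_{\alpha}})$, which exactly cancels the interior edge terms already present in \eqref{b-original}; the contributions on $\t\E^{D\cup S}_h$ cancel the clamped/simply-supported boundary terms in \eqref{b-original} as well.

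The remaining surface boundary terms live on $\t\E^F_h$, and these vanish by design. On an edge $e\in\E^F_h$ of a triangle $\tau$, converting to flat arclength via $\int_{\t e}\N^{\alpha\beta}n_{\beta}u^*_{\alpha}=\int_e\N^{\alpha\beta}\bar n_{\beta}u^*_{\alpha}\sqrt a\,ds$ and noting that $\bar n_{\beta}$ is constant on $e$ while $\N^{\alpha\beta}$ restricts to a linear function on $e$, I find that for each $\alpha$ the factor $\N^{\alpha\beta}\bar n_{\beta}$ belongs to $\P^1(e)$; the edge moment conditions in \eqref{uwI-edge1}--\eqref{uwI-edge2} then force the integral to vanish. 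The same applies to the $\eta^{\alpha}n_{\alpha}w^*$ contributions. Thus $b_h(\N,\!eta;\!theta^*,\!u^*,w^*)$ reduces to the five bulk integrals
$-\int_{\t\tau}\N^{\alpha\beta}|_{\beta}u^*_{\alpha}$, $-\int_{\t\tau}\eta^{\alpha}|_{\alpha}w^*$, $-\int_{\t\tau}\N^{\alpha\beta}b_{\alpha\beta}w^*$, $\int_{\t\tau}\eta^{\alpha}b^{\gamma}_{\alpha}u^*_{\gamma}$, and $\int_{\t\tau}\eta^{\alpha}\theta^*_{\alpha}$, summed over $\tau$.

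Each of these is then reduced using orthogonality to $\P^1(\tau)$. The term $\int_{\t\tau}\eta^{\alpha}\theta^*_{\alpha}$ vanishes outright since $\eta^{\alpha}\in\P^1(\tau)$ and \eqref{thetaI} holds. For the Christoffel contribution inside $\N^{\alpha\beta}|_{\beta}=\partial_{\beta}\N^{\alpha\beta}+\Gamma^{\alpha}_{\beta\lambda}\N^{\lambda\beta}+\Gamma^{\beta}_{\beta\tau}\N^{\alpha\tau}$, the piecewise-constant part $\partial_{\beta}\N^{\alpha\beta}$ pairs with $u^*_{\alpha}$ to give zero; for the $\Gamma\N$ pieces, replacing $\Gamma$ by its elementwise average $\tilde\Gamma$ yields a constant times a linear function, which is in $\P^1(\tau)$ and therefore also pairs to zero. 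The residual is controlled by
\begin{equation*}
\bigl|\textstyle\int_{\t\tau}(\Gamma-\tilde\Gamma)\N u^*\bigr|
\le \|\Gamma-\tilde\Gamma\|_{0,\infty,\tau}\,\|\N\|_{0,\t\tau}\,\|u^*\|_{0,\t\tau}
\lesssim h_{\tau}|\Gamma|_{1,\infty,\tau}\,\|\N\|_{0,\t\tau}\,\|u^*\|_{0,\t\tau},
\end{equation*}
and the $\N^{\alpha\beta}b_{\alpha\beta}w^*$, $\eta^{\alpha}|_{\alpha}w^*$, and $\eta^{\alpha}b^{\gamma}_{\alpha}u^*_{\gamma}$ terms are estimated identically, producing $h_{\tau}|b_{\alpha\beta}|_{1,\infty,\tau}$ and $h_{\tau}|b^{\beta}_{\alpha}|_{1,\infty,\tau}$ factors.

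Finally, writing $h_{\tau}=h^2_{\tau}\cdot h^{-1}_{\tau}$, pulling the $\max$ of $h^2_{\tau}$ times the geometric seminorms outside, and applying Cauchy--Schwarz across $\T_h$ gives exactly the bound \eqref{b3-error}. The principal obstacle I anticipate is the bookkeeping of the IBP: one must verify that the covariant divergences and the Green's theorem produce precisely the interior and boundary integrals that match (up to sign) those already in \eqref{form_b}, so that the cancellation is clean and nothing beyond the five bulk terms survives. The polynomial-on-edge observation that drives the vanishing of the free-boundary contributions is the other step where care is needed, since it is exactly this property that justifies using the enriched spaces $\P^e(\tau)$ and $\P^v(\tau)$ in the first place.
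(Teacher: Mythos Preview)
Your proposal is correct and follows essentially the same route as the paper's proof: element-wise Green's theorem to shift derivatives onto $(\N,\!eta)$, cancellation of the interior and $D\cup S$ edge terms against those already in \eqref{b-original} by continuity of $(\N,\!eta)$, vanishing of the free-boundary contributions via the weighted edge moments in \eqref{uwI-edge1}--\eqref{uwI-edge2}, and then elimination of the polynomial parts of the five remaining bulk integrals using the $\P^1$-orthogonality of the interpolation errors, leaving only the $h_\tau|\Gamma|_{1,\infty,\tau}$, $h_\tau|b_{\alpha\beta}|_{1,\infty,\tau}$, $h_\tau|b^\beta_\alpha|_{1,\infty,\tau}$ residuals. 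The paper phrases the interior-edge step slightly differently (it writes the net edge contribution after integration by parts as $\int_{\t\E^0_h}\lbra\N^{\alpha\beta}\rbra_{n_\beta}\lbrac u^*_\alpha\rbrac$ and then observes this vanishes), but this is equivalent to your cancellation argument.
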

\begin{proof}
With an application of the Green's theorem \eqref{Green} on each element $\t\tau\in\t\T_h$, summing up, we obtain the following  alternative expression to \eqref{b-original}.
\begin{multline*}
b(\N, \!eta; \!theta\e-\!theta^I, \!u\e-\!u^I,w\e-w^I)
=
\int_{\t\Omega_h}\left[-\N^{\alpha\beta}|_{\beta}\left(u\e_{\alpha}-u^I_{\alpha}\right)-b_{\alpha\beta}\N^{\alpha\beta}
\left(w\e-w^I\right) \right.\\
\left. -\eta^\alpha|_\alpha(w\e-w^I)+\eta^\alpha(\theta\e_\alpha-\theta^I_\alpha)+\eta^\beta b^\alpha_\beta(u\e_\alpha-u^I_\alpha)
\right]\\
+\int_{\t\E_h^0}\lbra\N^{\alpha\beta}\rbra_{n_{\beta}}\lbrac u\e_{\alpha}-u^I_{\alpha}\rbrac+
\int_{\t\E_h^0}\lbra\eta^{\alpha}\rbra_{n_\alpha}\lbrac w\e-w^I\rbrac\\
+\int_{\t\E_h^F}\N^{\alpha\beta}n_{\beta}
\left(u\e_{\alpha}-u^I_{\alpha}\right)+\int_{\t\E_h^F}\eta^\alpha n_\alpha(w\e-w^I).
\end{multline*}
Since $\N^{\alpha\beta}$ and $\eta^\alpha$ are continuous piecewise linear polynomials, on each $e\in\E^0_h$ we have $\lbra\N^{\alpha\beta}\rbra_{n_{\beta}}=0$
and $\lbra\eta^{\alpha}\rbra_{n_\alpha}=0$.
For each $e\in\E^F_h$, we have, see \eqref{Green},
\begin{equation*}
\begin{gathered}
\int_{\t e}\N^{\alpha\beta}n_{\beta}
\left(u\e_{\alpha}-u^I_{\alpha}\right)=\int_{e}\N^{\alpha\beta}\bar n_{\beta}
\left(u\e_{\alpha}-u^I_{\alpha}\right)\sqrt a=0,\\
\int_{\t e}\eta^\alpha n_\alpha(w\e-w^I)=\int_e\eta^\alpha \bar n_\alpha(w\e-w^I)\sqrt a=0.
\end{gathered}
\end{equation*}
Using the formulas \eqref{covariant-derivative} for the covariant derivatives  $\N^{\alpha\beta}|_{\beta}$ and $\eta^\alpha|_\alpha$, and using properties
of the interpolations \eqref{thetaI} to \eqref{uwI-edge2}, the expression is further simplified to
\begin{multline}\label{b-simple}
b(\N, \!eta; \!theta\e-\!theta^I, \!u\e-\!u^I,w\e-w^I)
\\=
\int_{\t\Omega_h}\left[\left(b^\alpha_\beta\eta^\beta-
\Gamma^{\beta}_{\beta\gamma}\N^{\alpha\gamma}-
\Gamma^{\alpha}_{\delta\beta}\N^{\delta\beta}\right)
\left(u\e_{\alpha}-u^I_{\alpha}\right)-\left(\Gamma^\delta_{\delta\alpha}\eta^\alpha+b_{\alpha\beta}\N^{\alpha\beta}\right)
\left(w\e-w^I\right) \right].
\end{multline}
The last term is estimated as follows.
For $\tau\in\T_h$, we have
\begin{equation*}
\int_{\t\tau}b_{\alpha\beta}\N^{\alpha\beta}
\left(w\e-w^I\right)=
\int_{\t\tau}\left[b_{\alpha\beta}-p^0(b_{\alpha\beta})\right]\N^{\alpha\beta}
\left(w\e-w^I\right)
\end{equation*}
Here, $p^0(b_{\alpha\beta})$ is the best constant approximation to $b_{\alpha\beta}$
in the space $L^{\infty}(\tau)$ such that
\begin{equation*}
\left|b_{\alpha\beta}-p^0(b_{\alpha\beta})\right|_{0,\infty,\tau}\le Ch_\tau \left|b_{\alpha\beta}\right|_{1,\infty,\tau}.
\end{equation*}
From this, we see
\begin{equation*}
\left|\int_{\t\tau}b_{\alpha\beta}\N^{\alpha\beta}
\left(w\e-w^I\right)\right|
\lesssim
h^2_\tau\left|b_{\alpha\beta}\right|_{1,\infty,\tau}|\N^{\alpha\beta}|_{0, \tau}h^{-1}_\tau|w\e-w^I|_{0,\tau}.
\end{equation*}
Summing up such estimates for all $\tau\in\T_h$, and using Cauchy--Schwarz inequality, we get
\begin{multline*}
\left|\int_{\t\Omega_h}b_{\alpha\beta}\N^{\alpha\beta}
\left(w\e-w^I\right)\right|\\
\lesssim
\left[\max_{\tau\in\T_h}\left(h^2_\tau\sum_{\alpha,\beta=1,2}\left|b_{\alpha\beta}\right|_{1,\infty,\tau}\right)\right]\|\N\|_{0,\Omega_h}
\left(\sum_{\tau\in\T_h}h^{-2}_\tau|w\e-w^I|^2_{0,\tau}
\right)^{1/2}.
\end{multline*}
The other terms in \eqref{b-simple} can be estimated in the same way.
\end{proof}

It is trivial to see that
\begin{multline}\label{c-error}
\left|c(\M\e-\M^I, \!xi\e-\!xi^I; \N, \!eta)\right|\\
\lesssim \|(\N, \!eta)\|_{\V_h}
\left(\sum_{\alpha,\beta=1,2}|\M^{\eps\alpha\beta}-\M^{I\alpha\beta}|_{0,\Omega_h}
+\sum_{\alpha=1,2}|\xi^{\eps\alpha}-\xi^{I\alpha}|_{0,\Omega_h}
\right)\ \forall\ (\N, \!eta) \in\V_h.
\end{multline}

The next result  is obtained by  combining  \eqref{a-error}, \eqref{b2-error}, \eqref{b3-error}, \eqref{c-error}, and \eqref {error-fraction}.
\begin{lem}\label{N-fem-lem}
Let $(\!theta^h, \!u^h, w^h)$ and $(\M^h, \!xi^h)$ be the finite element solution determined by the finite element model
\eqref{N-fem}.
Let $\theta^I_\alpha$, $u^I_{\alpha}$, and $w^I$ be the interpolations to $\theta\e_\alpha$, $u\e_\alpha$, and $w\e$
in the finite element space \eqref{FE-space}, which is defined by the formulas \eqref{thetaI}, \eqref{uwI-interior},
\eqref{uwI-edge1}, and \eqref{uwI-edge2}, respectively. Let
$\M^{I\alpha\beta}$ and $\xi^{I\alpha}$ be approximations
to $\M^{\eps\alpha\beta}$ and $\xi^{\eps\alpha}$  to be selected from the space of continuous piecewise linear functions.  We have
\begin{multline}\label{N-fem-error-lem}
\|(\!theta^h-\!theta^I, \!u^h-\!u^I, w^h-w^I)\|_{\H_h}\\
+
|(\M^h-\M^I, \!xi^h-\!xi^I)|_{\overline\V_h}+\eps\|(\M^h-\M^I, \!xi^h-\!xi^I)\|_{\V_h}
\\
\lesssim
\left[1+\eps^{-1}
\max_{\tau\in\T_h}h^2_\tau\left(\sum_{\alpha,\beta,\lambda=1,2}|\Gamma^{\lambda}_{\alpha\beta}|_{1,\infty,\tau}+
\sum_{\alpha,\beta=1,2}|b_{\alpha\beta}|_{1,\infty,\tau}+\sum_{\alpha,\beta=1,2}|b^\beta_\alpha|_{1,\infty,\tau}\right)
\right]
\\
\left\{
\sum_{\tau\in\T_h}\left[
\sum_{k=0}^2h^{2k-2}_\tau\left(
\sum_{\alpha=1,2}
\left(|\theta\e_{\alpha}-\theta^I_{\alpha}|^2_{k, \tau}
+
|u\e_{\alpha}-u^I_{\alpha}|^2_{k, \tau}\right)
+
|w\e-w^I|^2_{k, \tau}\right)
\right.\right.\\
\left.\left.+\sum_{k=0}^1h^{2k}_\tau\left(\sum_{\alpha,\beta=1,2}|\M^{\eps\alpha\beta}-\M^{I\alpha\beta}|^2_{k, \tau}
+\sum_{\alpha=1,2}|\xi^{\eps\alpha}-\xi^{I\alpha}|^2_{k,\tau}\right)
\right]\right\}^{1/2}.
\end{multline}
\end{lem}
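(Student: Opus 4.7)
The plan is to derive Lemma~\ref{N-fem-lem} by feeding the four ingredient estimates \eqref{a-error}, \eqref{b2-error}, \eqref{b3-error}, and \eqref{c-error} into the abstract equivalence \eqref{error-fraction} and then dividing through by the denominator in the correct way to recover the $1+\eps^{-1}(\cdots)$ factor.

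First, I would invoke consistency: since the exact solution to \eqref{N-P-model} satisfies the discrete equations \eqref{N-fem} with arbitrary (smooth) test functions, the pair $(\!theta^h-\!theta^I,\!u^h-\!u^I,w^h-w^I)$ and $(\M^h-\M^I,\!xi^h-\!xi^I)$ solves the same mixed system as the discretization errors of the chosen interpolants, and the abstract estimate \eqref{isomorphism-equiv} (applied to the discrete forms whose continuity/coerciveness constants have already been checked in \eqref{a-condition1}--\eqref{c-condition2}) yields the equivalence \eqref{error-fraction}.

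Next I would bound the four pieces in the numerator of \eqref{error-fraction} individually. The first two, $a_h(\!theta\e-\!theta^I,\!u\e-\!u^I,w\e-w^I;\!phi,\!v,z)$ and $b_h(\M\e-\M^I,\!xi\e-\!xi^I;\!phi,\!v,z)$, are controlled directly by \eqref{a-error} and \eqref{b2-error}, each producing a factor of $\|(\!phi,\!v,z)\|_{\H_h}$ which cancels against the corresponding term in the denominator. The third piece, $b_h(\N,\!eta;\!theta\e-\!theta^I,\!u\e-\!u^I,w\e-w^I)$, is exactly what Lemma~\ref{b3-error-lem} is designed for: using the projection properties \eqref{thetaI}--\eqref{uwI-edge2} to kill the leading-order volume terms and the edge contributions (since $\N,\!eta$ are continuous across $e\in\E^0_h$ and the moment conditions annihilate jumps across $e\in\E^F_h$), only the geometric-coefficient residual of \eqref{b-simple} survives, giving the factor $\max_\tau h^2_\tau(\sum|\Gamma|_{1,\infty}+\sum|b|_{1,\infty}+\sum|b^\cdot_\cdot|_{1,\infty})$ times $\|(\N,\!eta)\|_{\V_h}$. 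The last piece $\eps^2c_h(\M\e-\M^I,\!xi\e-\!xi^I;\N,\!eta)$ is trivially bounded by \eqref{c-error}, producing $\eps^2\|(\N,\!eta)\|_{\V_h}$ times an $L^2$ interpolation error.

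The key bookkeeping step is the division by the denominator $\|(\!phi,\!v,z)\|_{\H_h}+|(\N,\!eta)|_{\overline\V_h}+\eps\|(\N,\!eta)\|_{\V_h}$. Terms (i)--(ii) pair off cleanly against $\|(\!phi,\!v,z)\|_{\H_h}$ with no $\eps$ cost. Term (iii) is paired against $\eps\|(\N,\!eta)\|_{\V_h}$, which produces the crucial $\eps^{-1}$ in front of the geometric factor; this is precisely the source of the extra term in the estimate. Term (iv) is paired against $\eps\|(\N,\!eta)\|_{\V_h}$ and the leftover $\eps$ is absorbed into the constant. Summing the four contributions and grouping the right-hand side by element $\tau$ (using Cauchy--Schwarz to combine the maxima and the $\ell^2$ sums) produces the compound interpolation bound written in \eqref{N-fem-error-lem}, where the exponents $k\in\{0,1,2\}$ for the displacement/rotation variables arise from the trace inequality \eqref{trace} combined with the $h_\tau^{k-1}$ weighting inside $\|\cdot\|_{a_h}$, while $k\in\{0,1\}$ for $\M,\!xi$ comes from \eqref{b2-error} and \eqref{c-error}.

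The main obstacle is not in any single estimate but in keeping the $\eps$-scaling correct through the pairing: one must match $b_h(\N,\!eta;\ldots)$ against the $\eps$-weighted norm of $(\N,\!eta)$ in the denominator (not the stronger $\overline\V_h$ seminorm, which one cannot control by the residual of $(\!theta\e-\!theta^I,\!u\e-\!u^I,w\e-w^I)$ in a useful way). Without the carefully chosen interpolants of Lemma~\ref{b3-error-lem}, the third term would carry an $h^{-1}$ factor instead of $h^{+1}$, and after the $\eps^{-1}$ scaling the bound would be $\O(\eps^{-1})$ rather than $\O(1)$ on reasonably refined meshes. Everything else is a matter of routine $hp$-trace/inverse manipulations already invoked in the proofs of \eqref{a-condition1}--\eqref{c-condition2}.
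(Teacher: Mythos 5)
Your proposal is correct and follows essentially the same route as the paper: the paper itself states Lemma~\ref{N-fem-lem} is ``obtained by combining \eqref{a-error}, \eqref{b2-error}, \eqref{b3-error}, \eqref{c-error}, and \eqref{error-fraction},'' which is precisely the pairing-and-division argument you spell out, including the observation that term (iii) must be matched against $\eps\|(\N,\!eta)\|_{\V_h}$ to produce the $\eps^{-1}$ in front of the geometric factor and that the leftover $\eps$ from term (iv) is harmless.
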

%Although we see an estimate for the weak (semi) norm $|(\M^h-\M^I, \!xi^h-\!xi^I)|_{\overline\V_h}$ from this inequality,
%we do not know how to interpret  it.
%We therefore can not make any statement on the accuracy of approximating $(\M\e, \!xi\e)$ by the part of the finite element solution
%$(\M^h, \!xi^h)$.
%We will not pursue this lead, but concentrate on the error estimate for the primary variables.
%We assume that  for fixed $\eps$, the Naghdi model solution has the $H^3$ regularity.
%Under this assumption, components of the scaled membrane stress tensor and scaled transverse shear stress vector have the $H^2$
%regularity. Note that this regularity assumption does not
%imply that the $H^3$ norm of the model primary solution or $H^2$ norm
%of the scaled membrane stress and transverse shear stress
% are uniformly bounded. Instead, it is very likely that when $\eps\to 0$ these functions would grow unboundedly in these spaces.
We have the following theorem on the error estimate for the finite element method \eqref{N-fem}.
\begin{thm}
%If the Naghdi model solution has the regularity that $\theta\e_\alpha$,  $u\e_\alpha$,  and $w\e$ have finite $H^2$ norms, then
There is a constant $C$ that only depends on the shell middle surface and the shape regularity $\K$ of the triangulation, but otherwise
is independent of the triangulation $\T_h$ and the shell thickness , such that
\begin{multline*}
\|(\!theta\e-\!theta^h, \!u\e-\!u^h, w\e-w^h)\|_{\H_h}\\
\le C
\left[1+\eps^{-1}
\max_{\tau\in\T_h}h^2_\tau\left(\sum_{\alpha,\beta,\lambda=1,2}|\Gamma^{\lambda}_{\alpha\beta}|_{1,\infty,\tau}+
\sum_{\alpha,\beta=1,2}|b_{\alpha\beta}|_{1,\infty,\tau}+\sum_{\alpha,\beta=1,2}|b^\beta_\alpha|_{1,\infty,\tau}\right)
\right]
\\
\left\{
\sum_{\tau\in\T_h}h^2_{\tau}\left[\sum_{\alpha=1,2}\left(\|\theta\e_\alpha\|^2_{2,\tau}+\|u\e_\alpha\|^2_{2,\tau}\right)+\|w\e\|^2_{2,\tau}\right]\right.\\
+
\left.\sum_{\tau\in\T_h}h^4_{\tau}\left[\sum_{\alpha, \beta=1,2}\|\M^{\eps\alpha\beta}\|^2_{2,\tau}+\sum_{\alpha=1,2}\|\xi^{\eps\alpha}\|^2_{2,\tau}
\right]\right\}^{1/2}.
\end{multline*}
Here $(\!theta^h, \!u^h, w^h)$ is the primary part of the solution of
the finite element model \eqref{N-fem} with the finite element space defined by \eqref{FE-space}. The norm $\|\cdot\|_{\H_h}$
is defined by \eqref{Hh-norm}.
\end{thm}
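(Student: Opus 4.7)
The plan is to derive the theorem as a straightforward consequence of Lemma \ref{N-fem-lem} together with the triangle inequality and standard local approximation estimates. First I would write
\begin{equation*}
\|(\!theta\e-\!theta^h, \!u\e-\!u^h, w\e-w^h)\|_{\H_h} \le \|(\!theta\e-\!theta^I, \!u\e-\!u^I, w\e-w^I)\|_{\H_h} + \|(\!theta^h-\!theta^I, \!u^h-\!u^I, w^h-w^I)\|_{\H_h},
\end{equation*}
and bound the second term directly by the right-hand side of \eqref{N-fem-error-lem} in Lemma \ref{N-fem-lem}. The first term is not present in the statement but is easily absorbed by the same right-hand side once one notices that $\|\cdot\|_{\H_h}$ applied to the interpolation error is controlled by the $k=0,1$ parts of the sum appearing inside the braces (here the penalty contributions on interior edges of the interpolation error can be handled by the trace inequality \eqref{trace} applied element-by-element, since $\theta\e_\alpha, u\e_\alpha, w\e \in H^1$ jump to zero across interior edges).

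Next I would specify $\M^{I\alpha\beta}$ and $\xi^{I\alpha}$ to be any standard continuous piecewise linear quasi-interpolant (Cl\'ement or Scott--Zhang) of $\M^{\eps\alpha\beta}$ and $\xi^{\eps\alpha}$ in $\V_h$. This gives the local estimates
\begin{equation*}
\sum_{k=0}^{1}h_\tau^{2k}\bigl(|\M\e-\M^I|_{k,\tau}^2+|\!xi\e-\!xi^I|_{k,\tau}^2\bigr)\lesssim h_\tau^{4}\bigl(\|\M\e\|_{2,\omega_\tau}^2+\|\!xi\e\|_{2,\omega_\tau}^2\bigr),
\end{equation*}
where $\omega_\tau$ is the usual patch around $\tau$; shape regularity gives a finite overlap. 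For the primary variables, I would verify that the weighted $L^2$ projections \eqref{thetaI}--\eqref{uwI-edge2} (whether onto $\P^1(\tau)$, $\P^e(\tau)$, or $\P^v(\tau)$) are unisolvent and satisfy the optimal local approximation
\begin{equation*}
\sum_{k=0}^{2}h_\tau^{2k-2}\bigl(|\theta\e-\theta^I|_{k,\tau}^2+|\!u\e-\!u^I|_{k,\tau}^2+|w\e-w^I|_{k,\tau}^2\bigr)\lesssim h_\tau^{2}\bigl(\|\theta\e\|_{2,\tau}^2+\|\!u\e\|_{2,\tau}^2+\|w\e\|_{2,\tau}^2\bigr)
\end{equation*}
by the standard Bramble--Hilbert argument on the reference element, followed by scaling. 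The weight $\sqrt{a}$ is smooth and uniformly bounded above and below, so it does not affect the estimates.

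Combining the two pieces and summing over $\tau\in\T_h$ yields exactly the right-hand side in the statement, with the same prefactor $1+\eps^{-1}\max_\tau h_\tau^{2}(\cdot)$ that appears in Lemma \ref{N-fem-lem}. The main obstacle I anticipate is the verification that the enriched interpolants on elements touching $\E^F_h$ genuinely achieve the optimal $h$-order in the full set of seminorms $k=0,1,2$; this requires a careful reference-element argument showing that $\P^e(\tau)$ and $\P^v(\tau)$ are unisolvent under the degrees of freedom in \eqref{uwI-edge1}--\eqref{uwI-edge2} and that the associated projection operator preserves $\P^1(\tau)$, so that Bramble--Hilbert delivers the $O(h_\tau)$ factor. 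Everything else is routine local trace and inverse estimates already invoked in Sections \ref{FEMmodel} and \ref{BendingErrorAnalysis}.
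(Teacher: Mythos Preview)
Your proposal is correct and follows essentially the same route as the paper: triangle inequality, Lemma~\ref{N-fem-lem} for the discrete-to-interpolant term, trace estimate \eqref{trace} to absorb the edge penalties in $\|(\!theta\e-\!theta^I,\ldots)\|_{\H_h}$, and a scaling/Bramble--Hilbert argument (the paper invokes Verf\"urth's result) to establish the local approximation bounds \eqref{scaled-estimate} for the primary variables, including the enriched spaces $\P^e(\tau)$ and $\P^v(\tau)$. The only cosmetic difference is that for $\M^I,\!xi^I$ you propose a Cl\'ement/Scott--Zhang quasi-interpolant (yielding patch estimates that collapse under finite overlap), whereas the paper simply takes nodal interpolation, which gives the element-local bound directly; both choices lead to the stated result.
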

\begin{proof}
In view of the triangle inequality, we have
\begin{multline*}
\|(\!theta\e-\!theta^h, \!u\e-\!u^h, w\e-w^h)\|_{\H_h}\\
\le \|(\!theta\e-\!theta^I, \!u\e-\!u^I, w\e-w^I)\|_{\H_h}+\|(\!theta^h-\!theta^I, \!u^h-\!u^I, w^h-w^I)\|_{\H_h}.
\end{multline*}
Using the trace inequality \eqref{trace} to the edge terms in the norm  $\|(\!theta\e-\!theta^I, \!u\e-\!u^I, w\e-w^I)\|_{\H_h}$, cf., \eqref{Hh-norm}, we get
\begin{multline*}
\|(\!theta\e-\!theta^I, \!u\e-\!u^I, w\e-w^I)\|_{\H_h}\le C\\
\left\{
\sum_{\tau\in\T_h}\sum_{k=0, 1}h^{2k-2}_\tau\left[
\sum_{\alpha=1,2}\left(|u\e_{\alpha}-u^I_{\alpha}|^2_{k, \tau}+|\theta\e_{\alpha}-\theta^I_{\alpha}|^2_{k, \tau}\right)
+|w\e-w^I|^2_{k, \tau}
\right]\right\}^{1/2}.
\end{multline*}
For each $\tau\in\T_h$, we establish that
\begin{equation}\label{scaled-estimate}
\begin{gathered}
\sum_{k=0}^2h^{2k-2}_\tau|\theta\e_{\alpha}-\theta^I_{\alpha}|^2_{k, \tau}\le C h^2_\tau|\theta\e_\alpha|_{2,\tau},\\
\sum_{k=0}^2h^{2k-2}_\tau|u\e_{\alpha}-u^I_{\alpha}|^2_{k, \tau}\le C h^2_\tau|u\e_\alpha|_{2,\tau},\\
\sum_{k=0}^2h^{2k-2}_\tau|w\e-w^I|^2_{k, \tau}\le C h^2_\tau|w\e|^2_{2,\tau}.
\end{gathered}
\end{equation}

We scale $\tau$ to a similar triangle $\T$ whose diameter is $1$ by the scaling $X_\alpha=h^{-1}_\tau x_\alpha$. Let $\Theta_\beta(X_\alpha)=\theta\e_\beta(x_\alpha)$,
$U_\beta(X_\alpha)=u\e_\beta(x_\alpha)$,
$W(X_\alpha)=w\e(x_\alpha)$,
$A(X_\alpha)=a(x_\alpha)$, $\Theta^I_\beta(X_\alpha)=\theta^I_\beta(x_\alpha)$, $U^I_\beta(X_\alpha)=u^I_\beta(x_\alpha)$, and
$W^I(X_\alpha)=w^I(x_\alpha)$. It is easy to see that $\Theta^I_\alpha$ is the projection
of $\Theta_\alpha$ into $\P^1(\T)$ in the space $L^2(\T)$ weighted by $\sqrt{A(X_\alpha)}$.
This projection preserves linear  polynomials and we have the bound that
\begin{equation}\label{ThetaI-bound}
\|\Theta_\alpha^I\|_{0,\T}\le \left[\frac{\max_{\tau}a}{\min_{\tau}a}\right]^{1/4}\|\Theta_\alpha\|_{0,\T}.
\end{equation}
For a $\Theta_\alpha\in H^2(\T)$ and any linear polynomial $p$, using inverse inequality, there is a $C$ depending on the shape regularity of $\T$
such that
\begin{equation*}
\|\Theta_\alpha-\Theta^I_\alpha\|_{2, \T}\le \|\Theta_\alpha-p\|_{2, \T}+\|(\Theta_\alpha-p)^I\|_{2, \T}\le \|\Theta_\alpha-p\|_{2, \T}+C\|(\Theta_\alpha-p)^I\|_{0, \T}.
\end{equation*}
Therefore, there is a $C$ depending on the shape regularity of $\T$ and the  ratio ${\max_{\tau}a}/{\min_{\tau}a}$ such that
\begin{equation*}
\|\Theta_\alpha-\Theta^I_\alpha\|_{2, \T}\le C\|\Theta_\alpha-p\|_{2, \T}\ \ \forall\ p\in \P^1(\T).
\end{equation*}
Using the interpolation operator of \cite{Verfurth}, we can choose a $p\in \P^1(\T)$ and an absolute constant such that
 \begin{equation*}
\|\Theta_\alpha-\Theta^I_\alpha\|_{2, \T}\le C\|\Theta_\alpha-p\|_{2, \T}\le C|\Theta_\alpha|_{2, \T}
\end{equation*}
Scale this estimate from $\T$ to $\tau$, we obtain the first estimate in \eqref{scaled-estimate}.

If $\tau$ has no edge on the free boundary $\E^F_h$, the second inequality in \eqref{scaled-estimate} is proved
in the same way.
If $\tau$ has one or two edges on the free boundary, in place of the estimate
\eqref{ThetaI-bound}, we have that there is a $C$ depending only on the shape regularity of $\T$ such that
\begin{equation*} %\label{WI-bound}
\|U^I_\alpha\|_{0,\T}\le C \|U_\alpha\|_{1,\T}\ \ \forall\ U_\alpha \in H^1(\T).
\end{equation*}
For any $p\in \P^1(\T)$, we have
\begin{multline*}
\|U_\alpha-U^I_\alpha\|_{2, \T}\le \|U_\alpha-p\|_{2, \T}+\|(U_\alpha-p)^I\|_{2, \T}\\
\le \|U_\alpha-p\|_{2, \T}+C\|(U_\alpha-p)^I\|_{0, \T}
\le \|U_\alpha-p\|_{2, \T}+C\|U_\alpha-p\|_{1, \T}
\le C\|U_\alpha-p\|_{2, \T}.
\end{multline*}
Using the interpolation operator of \cite{Verfurth} again, we get
\begin{equation*}
\|U_\alpha-U^I_\alpha\|_{2, \T}\le C|U_\alpha|_{2,\T}.
\end{equation*}
Here $C$ only depends on the shape regularity of $\T$.
The second inequality in \eqref{scaled-estimate} then follows the scaling from $\T$ to $\tau$.
The third one is the same as the second one with  $\alpha=1$ or $2$.

Finally, we need to show that there exist interpolations $\M^{I\alpha\beta}$ and $\xi^{I\alpha}$ from continuous piecewise linear functions for $\M^{\eps\alpha\beta}$
and $\xi^{\eps\alpha}$, respectively,
such that
\begin{equation*}
\begin{gathered}
\sum_{\tau\in\T_h}\left(|\M^{\eps\alpha\beta}-\M^{I\alpha\beta}|^2_{0, \tau}+h^2_{\tau}|\M^{\eps\alpha\beta}-\M^{I\alpha\beta}|^2_{1, \tau}\right)
\le C
\sum_{\tau\in\T_h}h^4_{\tau}\|\M^{\eps\alpha\beta}\|^2_{2,\tau},\\
\sum_{\tau\in\T_h}\left(|\xi^{\eps\alpha}-\xi^{I\alpha}|^2_{0, \tau}+h^2_{\tau}|\xi^{\eps\alpha}-\xi^{I\alpha}|^2_{1, \tau}\right)
\le C
\sum_{\tau\in\T_h}h^4_{\tau}\|\xi^{\eps\alpha}\|^2_{2,\tau}.
\end{gathered}
\end{equation*}
This requirement can be
met by choosing $\M^{I\alpha\beta}$ and $\xi^{I\alpha}$ as the nodal point interpolations  of $\M^{\eps\alpha\beta}$
and $\xi^{\eps\alpha}$, respectively.
In view of Lemma~\ref{N-fem-lem}, the proof is completed.
\end{proof}
This theorem shows that formally the primary part of the solution of the finite element model \eqref{N-fem}
approximates the Naghdi model solution. Indeed, when the shell deformation is bending dominated,
this estimate implies that the approximation is accurate. The reason is as follows.
First, according to \eqref{abs-bending-est}, we have  that  as $\eps\to 0$,
$\|(\!theta\e, \!u\e, w\e)\|_{\!H^1\x\!H^!\x H^1}$ converges to a nonzero number. Therefore, if the error is small, then
the relative error is small in the $\H^1_h$ norm. Second, since  $(\!theta\e, \!u\e, w\e)$ converges to a limiting function
in $H^1$, and $(\M\e, \!xi\e)$ converges to a finite limit in a weak norm, it is possible to make the
upper bound in the theorem small, as long as the finite element mesh is able to resolve the layers in these functions.
However, this result does not imply accuracy of the finite element solution if the shell problem is  membrane/shear dominated.
Using the asymptotic estimate \eqref{membrane-goto-0}, for membrane/shear dominated shells,  we have $\|(\!theta\e, \!u\e, w\e)\|=o(\eps)$,  but
$(\M\e, \!xi\e)$ still converges to a finite limit, in the weak norm,
and there is not relative smallness in the $\H^1_h$ norm of the error.
In view of \eqref{intermediate-goto-0}, the situation is similar for intermediate shells, and the finite element solution can not
be an accurate approximation to the shell model solution.

\section{The DG method for  membrane/shear and intermediate shells}
\label{MembraneErrorAnalysis}
As in the abstract setting described in  the paragraph of equation \eqref{prob2as},  for shell deformations that are not bending dominated,
to deal with the diminishing behavior of the model solution, it is convenient to assume the loading force components  in the Naghdi model are
$\eps^{-2}p^i$, $\eps^{-2}q^i$ and $\eps^{-2}r^\alpha$, with $p^i, q^i, r^\alpha$ being independent of $\eps$.
Under this assumption, the Naghdi model \eqref{N-0-model} seeks
$(\!theta\e, \!u\e, w\e)\in H$ such that
\begin{multline}\label{N-0-model-m}
\eps^2a(\!theta\e, \!u\e, w\e;\  \!phi, \!v, z)+
\gamma(\!u\e, w\e;\!v, z)+\tau(\!theta\e, \!u\e, w\e; \!phi, \!v, z)\\
=\langle\!f;\!phi, \!v, z\rangle\ \ \forall\ (\!phi, \!v, z)\in H.
\end{multline}
The finite element model
\eqref{N-0-fem} seeks
$(\!theta, \!u, w)\in \ub\H_h$ such that
\begin{multline}\label{N-0-fem-m}
\eps^2a_h(\!theta, \!u, w;\  \!phi, \!v, z)+
\gamma_h(\!u, w;\!v, z)+\tau_h(\!theta, \!u, w; \!phi, \!v, z)\\
=\langle\!f;\!phi, \!v, z\rangle\ \ \forall\ (\!phi, \!v, z)\in \ub\H_h.
\end{multline}
Here $\!f$ is still expressed by \eqref{form_f}, in which the loading functions $p^i$, $q^i$, and $r^\alpha$ are independent of $\eps$.
A major issue here is  whether it is true that the model \eqref{N-0-fem-m} can be made well posed
by taking the penalty constant $\C$ in the definition of \eqref{form_a}, \eqref{rho-h}, \eqref{gamma-h}, and \eqref{tau-h}
sufficiently big. Just like in the mixed method for bending dominated shells, there is no absolute stability for the discontinuous Galerkin method
for membrane/shear dominated and intermediate shells,  which holds uniformly with respect to $\eps$
and $\T_h$. Instead, we have the following result.
\begin{lem}
\label{m-stability-lem}
We assume that the triangulation satisfies the condition that there is a constant $C$ independent of $\T_h$ and $\eps$ such that
\begin{equation}\label{m-stability-condition}
\max_{\tau\in\T_h}h^2_\tau\left(\sum_{\alpha,\beta,\lambda,\delta=1,2}|\Gamma^{\lambda}_{\alpha\beta}|_{\delta,\infty,\tau}+
\sum_{\alpha,\beta,\delta=1,2}|b_{\alpha\beta}|_{\delta,\infty,\tau}+\sum_{\alpha,\beta,\delta=1,2}|b^\beta_\alpha|_{\delta,\infty,\tau}\right)
\le C\eps.
\end{equation}
Then when the penalty constant $\C$ (that is independent of $\T_h$ and $\eps$) in \eqref{rho-h}, \eqref{gamma-h}, and \eqref{tau-h} is big enough,
the  bilinear form
in the finite element model \eqref{N-0-fem-m} is continuous and coercive in the finite element space $\ub\H_h$
with respect to the discrete energy norm. I.e.,
There is a constant $C$ independent of $\T_h$ and $\eps$ such that for any
$(\!theta, \!u, w)$ and $(\!phi, \!v, z)$ in $\ub\H_h$
\begin{multline}\label{m-DG-continuous}
\eps^2a_h(\!theta, \!u, w;\  \!phi, \!v, z)+
\gamma_h(\!u, w;\!v, z)+\tau_h(\!theta, \!u, w; \!phi, \!v, z)\\
\le C\left[\eps\|(\!theta, \!u, w)\|_{a_h}+\|(\!u, w)\|_{\gamma_h}+\|(\!theta, \!u, w)\|_{\tau_h}\right]
\\
\left[\eps\|( \!phi, \!v, z)\|_{a_h}+\|(\!v, z)\|_{\gamma_h}+\|( \!phi, \!v, z)\|_{\tau_h}\right],
\end{multline}
\begin{multline}\label{m-DG-coercive}
\left[\eps\|(\!theta, \!u, w)\|_{a_h}+\|(\!u, w)\|_{\gamma_h}+\|(\!theta, \!u, w)\|_{\tau_h}\right]^2\\
\le C
\left[\eps^2a_h(\!theta, \!u, w;\  \!theta, \!u, w)+
\gamma_h(\!u, w; \!u, w)+\tau_h(\!theta, \!u, w; \!theta, \!u, w)\right].
\end{multline}
%See \eqref{gamma-h-norm} and \eqref{tau-h-norm} for the discrete membrane and shear energy norms.
\end{lem}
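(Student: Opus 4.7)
The plan is to prove continuity \eqref{m-DG-continuous} and coercivity \eqref{m-DG-coercive} separately, using the trace inequality \eqref{trace}, the inverse inequality for finite-element polynomials, Young's inequality with a small parameter $\delta$, the Korn-type equivalence \eqref{Hh-ah-equiv}, and a sufficiently large choice of $\C$. The mesh condition \eqref{m-stability-condition} is invoked in precisely the places where $h_\tau^2|\nabla\Gamma|_\infty^2$, $h_\tau^2|\nabla b|_\infty^2$, and the like would otherwise produce unbounded factors.

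For the continuity \eqref{m-DG-continuous}, I would bound $\eps^2\rho_h$, $\gamma_h$, and $\tau_h$ in turn by the corresponding products of discrete strain norms. The volume integrals are immediate from Cauchy--Schwarz. For each consistency edge integral, the trace inequality gives $h_e\int_e|\text{strain}|^2\lesssim \|\text{strain}\|_{0,\tau}^2+h_\tau^2|\nabla\text{strain}|_{0,\tau}^2$ on each adjacent element. On a piecewise-linear $(\!theta,\!u,w)\in\ub\H_h$, differentiating the strain via \eqref{N-bending}--\eqref{covariant-derivative} and converting each derivative to $h_\tau^{-1}$ times an $L^2$ quantity by the inverse inequality yields
\begin{equation*}
h_\tau^2|\nabla\text{strain}|_{0,\tau}^2\lesssim\bigl[h_\tau^2(|\Gamma|_{1,\infty,\tau}^2+|b|_{1,\infty,\tau}^2+|b^\beta_\alpha|_{1,\infty,\tau}^2)+\text{geometric constants}\bigr](\|\!theta\|_{0,\tau}^2+\|\!u\|_{0,\tau}^2+\|w\|_{0,\tau}^2).
\end{equation*}
Under \eqref{m-stability-condition} the bracket is uniformly bounded, so Cauchy--Schwarz against the jump quantities completes the continuity proof.

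The coercivity \eqref{m-DG-coercive} is the substantive step. Testing $\eps^2a_h+\gamma_h+\tau_h$ on $(\!theta,\!u,w)$, positive-definiteness of $a^{\alpha\beta\lambda\gamma}$ and $a^{\alpha\beta}$ gives the volume lower bound $\eps^2\|\rho\|_{0,\Omega_h}^2+\|\gamma\|_{0,\Omega_h}^2+\|\tau\|_{0,\Omega_h}^2$, while the penalties deliver all the jump and boundary terms of the target norm. Consistency edge integrals arising inside $\gamma_h$ and inside $\tau_h$ are handled by the usual DG absorption argument, identical to \eqref{a-condition2} in Section~\ref{BendingErrorAnalysis}: by Young's inequality the strain part goes into the volume by taking $\delta$ small and the jump part into the penalty by taking $\C$ large. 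The novelty is the $\rho_h$ consistency terms involving $\lbra u_\delta\rbra_{n_\beta}$ and $b^\delta_\alpha\lbra v_\delta\rbra_{n_\beta}$ (lines 3 and 5 of \eqref{rho-h}), since $\rho_h$ itself has no matching penalty for $\lbra u\rbra$. Because of the $\eps^2$ weighting, the jump half of Young's inequality produces $\eps^2\delta^{-1}\sum_e h_e^{-1}\int_e\lbra u_\delta\rbra^2$, which is harmlessly dominated by the $\gamma_h$ penalty $\C\sum_e h_e^{-1}\int_e\lbra u_\delta\rbra^2$ once $\eps\le 1$ and $\C$ is $\eps$-independent; the $\lbra w\rbra$ pieces inside $\tau_h$ are treated identically.

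The hard part is the strain half of Young's inequality on the $\rho_h$ consistency integrals: $\eps^2\delta h_e\int_e|\lbrac\rho\rbrac|^2$, which by trace becomes $\eps^2\delta(\|\rho\|_{0,\tau}^2+h_\tau^2|\nabla\rho|_{0,\tau}^2)$. The first term is absorbed into $\eps^2\|\rho\|_{0,\Omega_h}^2$ by taking $\delta$ small; the second is bounded by the same estimate derived for the continuity step, where the bracketed prefactor is uniformly bounded precisely by \eqref{m-stability-condition}. The resulting contribution is at most $\eps^2$ times $L^2$ norms of $(\!theta,\!u,w)$, which by the Korn-type equivalence \eqref{Hh-ah-equiv} is absorbed into $\eps^2\|(\!theta,\!u,w)\|_{a_h}^2$ in the coercive lower bound. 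Assembling the pieces and choosing $\delta$ small then $\C$ large, both independently of $\T_h$ and $\eps$, delivers \eqref{m-DG-coercive}. I expect the most delicate part of the write-up to be exactly this cross-coupling bookkeeping --- ensuring every $\rho_h$ consistency term is paid by a $\gamma_h$ or $\tau_h$ penalty (or by the coercive volume lower bound, through Korn), without ever introducing a negative power of $\eps$.
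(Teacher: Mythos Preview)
Your treatment of the $\rho_h$ consistency terms is fine, but the claim that the $\gamma_h$ and $\tau_h$ consistency edge integrals are ``handled by the usual DG absorption argument, identical to \eqref{a-condition2}'' is where the argument breaks. The estimate behind \eqref{a-condition2} bounds $h_e\int_e|\gamma_{\alpha\beta}(\!u,w)|^2$ by $\|\!u\|_{1,\tau}^2+\|w\|_{1,\tau}^2$ (equivalently, after your inverse-inequality step, by a geometric constant times $\|\!u\|_{0,\tau}^2+\|w\|_{0,\tau}^2$). That is adequate when the target norm is $\|\cdot\|_{a_h}$, but here the energy norm carries only $\eps\|\cdot\|_{a_h}$. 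In your Young-inequality split you would need $\delta\,C_{\text{geom}}(\|\!u\|_0^2+\|w\|_0^2)\lesssim\eps^2\|(\!theta,\!u,w)\|_{a_h}^2$, which forces $\delta\lesssim\eps^2$ and hence $\C\gtrsim\eps^{-2}$---not $\eps$-independent. The same obstruction kills the continuity estimate: your bracket being ``uniformly bounded'' leaves an unweighted $\|\!u\|_0+\|w\|_0$ multiplying the jump factor, and that quantity is not dominated by $\eps\|\cdot\|_{a_h}+\|\cdot\|_{\gamma_h}+\|\cdot\|_{\tau_h}$.

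The missing idea is the paper's projection splitting. On each element, project $\gamma_{\alpha\beta}(\!u,w)$ in $L^2$ onto $\P^1(\tau)$, call it $p$; then $h_e\int_e|p|^2\lesssim\|\gamma_{\alpha\beta}\|_{0,\tau}^2$ by the inverse inequality (valid because $p$ is a polynomial), while $h_e\int_e|\gamma_{\alpha\beta}-p|^2\lesssim h_\tau^4|\gamma_{\alpha\beta}|_{2,\tau}^2$. Since $\!u,w$ are piecewise linear, the second derivatives of $\gamma_{\alpha\beta}$ involve only $|\Gamma|_{1,\infty,\tau}$, $|\Gamma|_{2,\infty,\tau}$, $|b|_{1,\infty,\tau}$, $|b|_{2,\infty,\tau}$ times $\|\!u\|_{1,\tau}$, $\|w\|_{1,\tau}$---the zeroth-order coefficients $|\Gamma|_{0,\infty}$, $|b|_{0,\infty}$ drop out because $\partial^2 u=\partial^2 w=0$. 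The mesh condition \eqref{m-stability-condition} then gives $h_\tau^4|\gamma_{\alpha\beta}|_{2,\tau}^2\lesssim\eps^2(\|\!u\|_{1,\tau}^2+\|w\|_{1,\tau}^2)$, and this $\eps^2$ is exactly what lets the residual be absorbed into $\eps^2\|\cdot\|_{a_h}^2$ via Korn while keeping $\C$ independent of $\eps$. This is also why \eqref{m-stability-condition} involves $|\cdot|_{\delta,\infty,\tau}$ for $\delta=1,2$ rather than only $\delta=1$.
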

\begin{proof}
In view of the definitions of the bilinear forms \eqref{gamma-h} and \eqref{tau-h}, and the definitions of the discrete norms and seminorms
\eqref{ah-norm}, \eqref{gamma-h-norm} and
\eqref{tau-h-norm}, we need to estimate a number of  integrals on the edges in $\E^0_h$, $\E^D_h$, and $\E^S_h$.
Let $e\in \E^0_h$ be an interior edge shared by $\tau_1$ and $\tau_2$.  We consider a typical term
\begin{equation}
\int_{\t e}
\lbrac\gamma_{\alpha\beta}(\!u, w)\rbrac
\lbra v_\delta\rbra_{n_\gamma}=\int_{e}
\lbrac\gamma_{\alpha\beta}(\!u, w)\rbrac
\lbra v_\delta\rbra_{\bar n_\gamma}.
\end{equation}
In the space $L^2(\tau_\sigma)$, we project $\gamma_{\alpha\gamma}(\!u, w)$ into $\P^1(\tau_\sigma)$,
and piece the projections
together to get a $p$ on $\tau_1\cup\tau_2$, which could be discontinuous over $e$.  Then
\begin{equation*}
\lbrac\gamma_{\alpha\beta}(\!u, w)\rbrac=\lbrac\gamma_{\alpha\beta}(\!u, w)-p\rbrac+\lbrac p\rbrac.
\end{equation*}
Using H\"older  inequality, the trace theorem \eqref{trace}, and the inverse inequality for linear functions, we have
\begin{multline}\label{p-est}
\left|\int_{e}
\lbrac p\rbrac
\lbra v_\delta\rbra_{\bar n_\gamma}\right|
\lesssim
\left[\sum_{\sigma=1,2}\left(
|p|^2_{0,\tau_\sigma}+h^2_{\tau_\sigma}|p|^2_{1,\tau_\sigma}\right)\right]^{1/2}h^{-1/2}_e|\lbra v_\delta\rbra|_{0,e}\\
\lesssim
\left[\sum_{\sigma=1,2}|\gamma_{\alpha\beta}(\!u, w)|_{0,\tau_\sigma}\right]h^{-1/2}_e|\lbra v_\delta\rbra|_{0,e}.
\end{multline}

Using H\"older inequality, the trace inequality \eqref{trace}, and error estimate of the $L^2$ projection, we get the estimate
\begin{multline}\label{gamma-p-est}
\left|\int_{\t e}
\lbrac\gamma_{\alpha\beta}(\!u, w)-p\rbrac
\lbra v_\delta\rbra_{\bar n_\gamma}\right|\lesssim \\
\left[\sum_{\sigma=1,2}\left(|\gamma_{\alpha\beta}(\!u, w)-p|^2_{0,\tau_\sigma}+h^2_{\tau_\sigma}|\gamma_{\alpha\beta}(\!u, w)-p|^2_{1,\tau_\sigma}\right)\right]^{1/2}
h^{-1/2}_e|\lbra v_\delta\rbra|_{0,e}\\
\le
\left[\sum_{\sigma=1,2}\left(h^4_{\tau_\sigma}|\gamma_{\alpha\beta}(\!u, w)|^2_{2,\tau_\sigma}\right)\right]^{1/2}
h^{-1/2}_e|\lbra v_\delta\rbra|_{0,e}.
\end{multline}
In view of the definition of the membrane strain operator \eqref{N-metric}, and using the fact that $u_\alpha$ and $w$ are piecewise linear functions,
we have
\begin{equation}\label{gamma-2-est}
|\gamma_{\alpha\beta}(\!u, w)|^2_{2,\tau}\le \left(|\Gamma^{\lambda}_{\alpha\beta}|^2_{2,\infty,\tau}+|\Gamma^{\lambda}_{\alpha\beta}|^2_{1,\infty,\tau}\right)
\|u_{\lambda}\|^2_{1,\tau}
+\left(|b_{\alpha\beta}|^2_{2, \infty, \tau}+|b_{\alpha\beta}|^2_{1,\infty, \tau}\right)\|w\|^2_{1,\tau}.
\end{equation}
Under the condition \eqref{m-stability-condition}, we have
\begin{multline}\label{gamma-est}
\left|\int_{\t e}
\lbrac\gamma_{\alpha\beta}(\!u, w)\rbrac
\lbra v_\delta\rbra_{\bar n_\gamma}\right|\lesssim h^{-1/2}_e|\lbra v_\delta\rbra|_{0,e}\\
\left[\sum_{\sigma=1,2}
\left(h^4_{\tau_\sigma}
 \sum_{\rho=1,2}\left(|\Gamma^{\lambda}_{\alpha\beta}|^2_{\rho,\infty,\tau_\sigma}\|u_\lambda\|^2_{1,\tau_\sigma}+|b_{\alpha\beta}|^2_{\rho, \infty, \tau_\sigma}
\|w\|^2_{1,\tau_\sigma}\right)+
|\gamma_{\alpha\beta}(\!u, w)|^2_{0,\tau_\sigma}\right)\right]^{1/2}\\
\lesssim
 h^{-1/2}_e|\lbra v_\delta\rbra|_{0,e}
\left[\sum_{\sigma=1,2}
\left(\eps^2\|\!u\|^2_{1,\tau_\sigma}+\eps^2\|w\|^2_{1,\tau_\sigma}+
|\gamma_{\alpha\beta}(\!u, w)|^2_{0,\tau_\sigma}\right)\right]^{1/2}.
\end{multline}
Similarly, we have
\begin{multline}\label{tau-est}
\left|\int_{\t e}
\lbrac\tau_{\alpha}(\!theta, \!u, w)\rbrac
\lbra z\rbra_{\bar n_\beta}\right|
\lesssim
 h^{-1/2}_e|\lbra z\rbra|_{0,e}
\left[\sum_{\sigma=1,2}\left(
\eps^2\|\!u\|^2_{1,\tau_\sigma}+
|\tau_{\alpha}(\!theta, \!u, w)|^2_{0,\tau_\sigma}\right)\right]^{1/2}.\hfill
\end{multline}

If the edge $e$ is in $\E^{D\cup S}_h$, then it is the edge of one element. We have similar estimates for
$\int_{\t e}
\gamma_{\alpha\beta}(\!u, w)
v_\delta n_\gamma$
and
$\int_{\t e}
\tau_{\alpha}(\!theta, \!u, w)
z n_\beta$,
except that $|\lbra v_\delta\rbra|_{0,e}$ and $|\lbra z\rbra|_{0,e}$ are replaced by $|v_\delta|_{0,e}$ and $|z|_{0,e}$, respectively.
It follows from the Korn's inequality \eqref{Hh-ah-equiv} that
\begin{equation}\label{Korn-last}
\eps^2\sum_{\tau\in\T_h}\left(\|\!u\|^2_{1,\tau}+\|w\|^2_{1,\tau}\right)\lesssim\eps^2\|(\!theta, \!u, w)\|^2_{a_h}.
\end{equation}
The continuity \eqref{m-DG-continuous} then follows from \eqref{a-condition1}, \eqref{a-condition2}, the inequality
\eqref{Korn-last}, and the above estimates \eqref{gamma-est} and \eqref{tau-est} on the edge integrals.

From the estimates \eqref{gamma-est} and \eqref{tau-est},
using the Cauchy--Schwarz inequality, we see that there is an absolute constant $\ub C$ such that for any $C\ge\ub C$
\begin{multline}\label{gamma-est-Cauchy}
\left|\int_{\t e}
\lbrac\gamma_{\alpha\beta}(\!u, w)\rbrac
\lbra u_\delta\rbra_{\bar n_\gamma}\right|
\le
C h^{-1}_e|\lbra u_\delta\rbra|^2_{0,e}
+\frac1C\sum_{\sigma=1,2}
\left(\eps^2\|\!u\|^2_{1,\tau_\sigma}+\eps^2\|w\|^2_{1,\tau_\sigma}+
|\gamma_{\alpha\beta}(\!u, w)|^2_{0,\tau_\sigma}\right)
\end{multline}
and
\begin{multline}\label{tau-est-Cauchy}
\left|\int_{\t e}
\lbrac\tau_{\alpha}(\!theta, \!u, w)\rbrac
\lbra w\rbra_{\bar n_\beta}\right|
\le
Ch^{-1}_e|\lbra w\rbra|^2_{0,e}
+\frac1C\sum_{\sigma=1,2}\left(
\eps^2\|\!u\|^2_{1,\tau_\sigma}+
|\tau_{\alpha}(\!theta, \!u, w)|^2_{0,\tau_\sigma}\right).\hfill
\end{multline}
Similar estimates hold for edge integrals on $\E^{D\cup S}_h$.
From these estimates and  the inequality \eqref{Korn-last},  it is  clear
to see that when the penalty constant $\C$ in \eqref{rho-h}, \eqref{gamma-h}, and \eqref{tau-h}
is big enough, we have the coerciveness \eqref{m-DG-coercive}.
\end{proof}

\begin{remark}
If the finite element method \eqref{N-0-fem-m} is defined in the boundary enriched finite element space $\H_h$, instead of $\ub\H_h$, then
a requirement stronger than \eqref{m-stability-condition} needs to be imposed on the finite element mesh
such that the continuity and coerciveness of Lemma~\ref{m-stability-lem} could hold.

Let $\tau$ be an element that has one or two edges  in $\E^F_h$. For finite element functions in the enriched space
$\H_h$,  the second derivatives of $u_\lambda$ and $w$ will not be zero on $\tau$, which need to be bounded  by
their first derivatives by using the inverse inequality.
In lieu of \eqref{gamma-2-est}, we now only have
\begin{multline}
|\gamma_{\alpha\beta}(\!u, w)|^2_{2,\tau}\le \left(|\Gamma^{\lambda}_{\alpha\beta}|^2_{2,\infty,\tau}+|\Gamma^{\lambda}_{\alpha\beta}|^2_{1,\infty,\tau}
+h^{-2}_\tau|\Gamma^{\lambda}_{\alpha\beta}|^2_{0,\infty,\tau}
\right)
\|u_{\lambda}\|^2_{1,\tau}\\
+\left(|b_{\alpha\beta}|^2_{2, \infty, \tau}+|b_{\alpha\beta}|^2_{1,\infty, \tau}+h^{-2}_\tau|b_{\alpha\beta}|^2_{0,\infty,\tau}\right)\|w\|^2_{1,\tau}.
\end{multline}
As a consequence, in addition to the condition \eqref{m-stability-condition}, we need to require that
\begin{equation*}
\max_{\overline \tau\cap\E^F_h\ne\emptyset}h_\tau\left(\sum_{\alpha,\beta,\lambda=1,2}|\Gamma^{\lambda}_{\alpha\beta}|_{0,\infty,\tau}+
\sum_{\alpha,\beta=1,2}|b_{\alpha\beta}|_{0,\infty,\tau}+\sum_{\alpha,\beta=1,2}|b^\beta_\alpha|_{0,\infty,\tau}\right)\le C\eps.
\end{equation*}
This requires the mesh size along the shell free edge to be equal to the shell thickness.
One way to improve the stability in this case is localizing the penalty constant $\C$ in  \eqref{gamma-h} and
\eqref{tau-h}, and making it bigger on edges next to the shell free boundary.
\end{remark}

\begin{thm}
%We assume the solution of the Naghdi model \eqref{N-model} is membrane/shear dominated or intermediate, and assume that the loading force
%scales as $\eps^{-2}p^i$, $\eps^{-2}q^i$ and $\eps^{-2}r^\alpha$, with $p^i, q^i, r^\alpha$ being independent of $\eps$.
Let $(\!theta\e, \!u\e, w\e)$ be the solution of the shell model \eqref{N-0-model-m}.
Let $(\!theta^h, \!u^h, w^h)\in \ub\H_h$ be the solution of
the finite element model \eqref{N-0-fem-m}. %, with the finite element space being $\ub\H_h$,
%, and the finite element model
%s defined by \eqref{N-0-fem-m}.
If the triangulation $\T_h$ satisfies the condition \eqref{m-stability-condition}, then when the penalty constant $\C$ in \eqref{rho-h},
\eqref{gamma-h}, and \eqref{tau-h} is big enough, the finite element model has a unique solution and
%If the Naghdi model solution has the regularity that $\theta\e_\alpha$,  $u\e_\alpha$,  and $w\e$ have finite $H^2$ norms, then
there is a constant $C$ that only depends on the shape regularity $\K$ of the triangulation, but otherwise
is independent of the triangulation $\T_h$ and the shell thickness , such that
\begin{multline}\label{membrane-thm-est}
\eps\|(\!theta\e-\!theta^h, \!u\e-\!u^h, w\e-w^h)\|_{\H_h}\\+
\|(\!u\e-\!u^h, w\e-w^h)\|_{\gamma_h}+
\|(\!theta\e-\!theta^h, \!u\e-\!u^h, w\e-w^h)\|_{\tau_h}\\
\le C
\left\{
\sum_{\tau\in\T_h}h^2_{\tau}\left[\eps^2\sum_{\alpha=1,2}\|\theta\e_\alpha\|^2_{2,\tau}
+\sum_{\alpha=1,2}\left(\|\theta\e_\alpha\|^2_{1,\tau}+\|u\e_\alpha\|^2_{2,\tau}\right)+\|w\e\|^2_{2,\tau}\right]\right\}^{1/2}.
\end{multline}
\end{thm}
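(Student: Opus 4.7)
\emph{Proof proposal.} The plan is to run a standard C\'ea-type argument in the discrete energy norm
\begin{equation*}
\ll V\ll^2:=\eps^2\|V\|^2_{a_h}+\|(\!v,z)\|^2_{\gamma_h}+\|V\|^2_{\tau_h},\quad V=(\!phi,\!v,z)\in\ub\H_h,
\end{equation*}
in which, by Lemma~\ref{m-stability-lem}, the bilinear form $A_h(U;V):=\eps^2a_h(U;V)+\gamma_h(U;V)+\tau_h(U;V)$ is continuous and coercive on $\ub\H_h$ as soon as the penalty $\C$ is large and \eqref{m-stability-condition} holds. Well-posedness of \eqref{N-0-fem-m} then follows from Lax--Milgram, using the trace inequality \eqref{trace} and the Korn-type equivalence \eqref{Hh-ah-equiv} to see that $\langle\!f;\cdot\rangle$ is bounded on $\ub\H_h$.

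The finite element model is consistent with \eqref{N-0-model-m}: repeatedly applying Green's theorem \eqref{Green} on each $\t\tau$ and using that $(\!theta\e,\!u\e,w\e)$ is $H^1$-conforming and satisfies the essential boundary conditions converts $A_h(\!theta\e,\!u\e,w\e;V)$ into $\langle\!f;V\rangle$ for any $V\in\ub\H_h$. Subtracting from \eqref{N-0-fem-m} gives Galerkin orthogonality. Pick as interpolant $(\!theta^I,\!u^I,w^I)\in\ub\H_h$ the element-wise weighted $L^2(\tau)$ projection of $(\!theta\e,\!u\e,w\e)$ onto $\P^1(\tau)$, and set $E_h=(\!theta^h-\!theta^I,\!u^h-\!u^I,w^h-w^I)\in\ub\H_h$ and $E_I=(\!theta\e-\!theta^I,\!u\e-\!u^I,w\e-w^I)$. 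Coerciveness and orthogonality give $\ll E_h\ll^2\lesssim A_h(E_I;E_h)$. The triangle inequality then reduces the theorem to two tasks: (i) bound $|A_h(E_I;E_h)|\lesssim \Phi\ll E_h\ll$ with $\Phi^2$ equal to the RHS of \eqref{membrane-thm-est}, and (ii) bound $\ll E_I\ll\lesssim\Phi$.

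For (ii), standard polynomial approximation $\|\theta\e_\alpha-\theta^I_\alpha\|_{k,\tau}\lesssim h^{2-k}_\tau|\theta\e_\alpha|_{2,\tau}$ and the analogous bounds for $\!u\e,w\e$ (for $k=0,1$), combined with \eqref{trace} applied to the jump and boundary penalty terms of the three norms \eqref{rho-h-norm}--\eqref{tau-h-norm}, produce termwise matches: $\eps^2\|\rho(E_I)\|^2_0$ supplies the $\eps^2 h^2|\theta\e|^2_2$ piece; $\|\gamma(E_I)\|^2_0$ supplies $h^2|\!u\e|^2_2$ (and a higher-order $h^4|w\e|^2_2$); $\|\tau(E_I)\|^2_0$ supplies $h^2|\theta\e|^2_1+h^2|w\e|^2_2$ (and $h^4|\!u\e|^2_2$). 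Task (i) is carried out exactly in the spirit of the proof of Lemma~\ref{m-stability-lem}: for each edge average $\lbrac\gamma_{\alpha\beta}(E_I)\rbrac$ or $\lbrac\tau_\alpha(E_I)\rbrac$, split as (local $\P^1$-projection on $\tau_\sigma$) plus (residual), apply the inequalities \eqref{p-est}--\eqref{gamma-p-est} to each piece without invoking the inverse inequality on the smooth factor $E_I$, and then close by H\"older and Cauchy--Schwarz against the jump/boundary factors of $E_h$, which sit inside $\ll E_h\ll$.

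The main obstacle is task (i), specifically the interplay between the curvature/Christoffel factors produced by the analogue of \eqref{gamma-2-est} and the mesh condition \eqref{m-stability-condition}. Unlike in Lemma~\ref{m-stability-lem}, where $u$ and $w$ were finite element functions with vanishing second derivatives, here $\rho(E_I)$, $\gamma(E_I)$, and $\tau(E_I)$ carry genuine second derivatives of the true solution in their smooth residual part; the job of \eqref{m-stability-condition} is to turn the factors $h^2_\tau|\Gamma|^2_{\rho,\infty,\tau}$ and $h^2_\tau|b|^2_{\rho,\infty,\tau}$ appearing from differentiating $\gamma$ and $\tau$ into $\eps^2$-savings that can be absorbed by the bending part $\eps\|E_h\|_{a_h}$ via the discrete Korn inequality \eqref{Hh-ah-equiv} (as in \eqref{Korn-last}). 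Once this bookkeeping is organized so that the smooth part of $E_I$ keeps its natural $h^2_\tau|\cdot|^2_{2,\tau}$ scaling and the $\eps$ prefactor stays only on the $\|\rho(E_I)\|_0$ contribution, summing over $\tau\in\T_h$ and over their edges yields \eqref{membrane-thm-est}.
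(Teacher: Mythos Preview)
Your proposal is correct and follows the same C\'ea-type argument as the paper: consistency plus the continuity/coerciveness of Lemma~\ref{m-stability-lem} yield quasi-optimality in the discrete energy norm, and the element-wise $L^2$ projections onto $\P^1(\tau)$ are then used as the interpolant, with the final bound following from the norm definitions \eqref{rho-h-norm}--\eqref{tau-h-norm}, the strain formulas \eqref{N-bending}--\eqref{N-shear}, and the Korn equivalence \eqref{Hh-ah-equiv}. The paper's proof is two short paragraphs and simply asserts quasi-optimality; you are being more explicit about why continuity still holds when one slot contains $E_I\notin\ub\H_h$.

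Your handling of that point, however, is slightly tangled. The mesh condition \eqref{m-stability-condition} is \emph{not} needed for the edge terms carrying $\lbrac\gamma_{\alpha\beta}(E_I)\rbrac$ or $\lbrac\tau_\alpha(E_I)\rbrac$: for these you can apply the trace inequality \eqref{trace} directly to get $h_e\int_e|\gamma(E_I)|^2\lesssim\|\gamma(E_I)\|^2_{0,\tau}+h^2_\tau|\gamma(E_I)|^2_{1,\tau}$, and since $E_I$ has genuine element-wise $H^2$ regularity this already sits inside the target bound $h^2_\tau|u^\eps|^2_{2,\tau}+\cdots$. No $\P^1$-projection splitting, no $\eps$-absorption is required here. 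Where \eqref{m-stability-condition} \emph{is} used in task~(i) is for the symmetric edge terms carrying $\lbrac\gamma_{\alpha\beta}(E_h)\rbrac$ and $\lbrac\tau_\alpha(E_h)\rbrac$, paired with jumps of $E_I$: here $E_h\in\ub\H_h$ is piecewise linear, the argument of Lemma~\ref{m-stability-lem} (via \eqref{gamma-2-est} and \eqref{m-stability-condition}) applies verbatim and produces the $\eps^2\|E_h\|^2_{1,\tau}$ that is absorbed into $\eps\|E_h\|_{a_h}$ through \eqref{Korn-last}. Once this bookkeeping is corrected, your outline goes through and matches the paper.
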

\begin{proof}
It follows from the consistency of the finite element model \eqref{N-0-fem-m} and the shell model \eqref{N-0-model-m}
and the continuity and coerciveness of the finite element model established in Lemma~\ref{m-stability-lem} that
the finite element solution has the optimal accuracy in the energy norm that
\begin{multline}
\eps\|(\!theta\e-\!theta^h, \!u\e-\!u^h, w\e-w^h)\|_{\rho_h}+
\|(\!u\e-\!u^h, w\e-w^h)\|_{\gamma_h}+
\|(\!theta\e-\!theta^h, \!u\e-\!u^h, w\e-w^h)\|_{\tau_h}\\
\lesssim
\eps\|(\!theta\e-\!theta^I, \!u\e-\!u^I, w\e-w^I)\|_{\rho_h}+
\|(\!u\e-\!u^I, w\e-w^I)\|_{\gamma_h}\\+
\|(\!theta\e-\!theta^I, \!u\e-\!u^I, w\e-w^I)\|_{\tau_h}
\ \forall\ \ (\!theta^I, \!u^I, w^I)\in \ub\H_h.
\end{multline}
We take $\theta^I_\alpha$, $u^I_\alpha$, and $w^I$ as the element-wise $L^2$ projections of $\theta\e_\alpha$, $u\e_\alpha$, and $w\e$
into the spaces of piecewise linear functions, respectively.
The results of the theorem then follows from the norm definitions \eqref{rho-h-norm}, \eqref{gamma-h-norm}, \eqref{tau-h-norm},
the Korn inequality \eqref{Hh-ah-equiv},
and the strain operator definitions \eqref{N-bending}, \eqref{N-metric}, and \eqref{N-shear}.
\end{proof}

This result implies accuracy of the finite element solution defined by the finite element model \eqref{N-0-fem-m} when the shell deformation is
membrane/shear dominated. Under the loading scale assumption made at the beginning of this section, in view of the
abstract results \eqref{abs-membrane-est}, for membrane/shear dominated shells, we have
the asymptotic estimates on the shell model solution that as $\eps\to 0$
\begin{equation*}%\label{abs-membrane-est}
\begin{gathered}
(\!theta\e, \!u\e, w\e) \ \text{ converges to a finite limit in the membrane/shear energy norm},\\
\eps^2\rho(\!theta\e, \!u\e, w\e)+\gamma(\!u\e, w\e)+\tau(\!theta\e, \!u\e, w\e) \ \text{ tends to a finite non-zero limit}.
\end{gathered}
\end{equation*}
The convergence $(\!theta\e, \!u\e, w\e)$ to a finite limiting function ensures that the right hand side of
\eqref{membrane-thm-est} can be made small as long as the finite element mesh can resolve the
layers in the shell model solution. The total energy tending to a non zero limit means that the relative error in the
energy norm can be made small, which is the desired accuracy.

The estimate \eqref{membrane-thm-est} also implies accuracy of the finite element solution for intermediate shell problems.
In this case,  in view of the abstract result \eqref{abs-intermediate-est}, we have  the asymptotic behavior of the
shell model solution that
\begin{equation*}%\label{abs-membrane-est}
\begin{gathered}
(\!theta\e, \!u\e, w\e) \ \text{ converges to a finite limit in an abstract space},\\
\eps^2\rho(\!theta\e, \!u\e, w\e)+\gamma(\!u\e, w\e)+\tau(\!theta\e, \!u\e, w\e)\simeq o(\eps^{-2}).
\end{gathered}
\end{equation*}
Intermediate shells usually exhibit stronger boundary and internal layers in its deformation and stress distribution. The fact that
the model solution converges to a finite limit, although in an abstract sense, still means that the upper bound in the
error estimate \eqref{membrane-thm-est} can be made small as long as the finite element  mesh can resolve
the stronger layers. Formally, the total energy tending to infinity at the rate of $o(\eps^{-2})$ seems to imply that it is easier to make the relative error
of finite element solution small. This, however,only suggests that the layers in the model solution is harder to resolve.

Although the estimate \eqref{membrane-thm-est} is  valid for bending dominated shells, it does not imply accuracy of the finite element solution.
Under the loading scale assumption of this section, it follows from the abstract results \eqref{abs-decomp} and \eqref{Thm2.1-1}
that for bending dominated shells we have the following asymptotic estimates on the shell model solution.
\begin{equation*}%\label{abs-membrane-est}
\begin{gathered}
(\!theta\e, \!u\e, w\e)\simeq\eps^{-2}(\!theta^0, \!u^0, w^0),\\
\eps^2\rho(\!theta\e, \!u\e, w\e)+\gamma(\!u\e, w\e)+\tau(\!theta\e, \!u\e, w\e)\simeq\eps^{-2}.
\end{gathered}
\end{equation*}
The optimal approximation error is amplified in the finite element error by the factor $\eps^{-1}$ in the relative energy norm.
This may also be explained in terms of membrane/shear locking.

\section{A finite element procedure for shells}
\label{Procedure}
As we discussed at the end of Section~\ref{FEMmodel}, we only need to write the code for the mixed method \eqref{N-fem}
with an adjustable parameter $\vtheta$ included. The code for the discontinuous Galerkin method \eqref{N-0-fem}
can be obtained by cutting out a portion of the program.
For a given shell problem, depending on whether the shell problem is bending dominated or not,
either the mixed finite element method \eqref{N-fem} or the discontinuous Galerkin method \eqref{N-0-fem}
produces accurate numerical approximation.
It is very important to know  the asymptotic regime to which a shell problem belongs.
For many shell problems, one can determine their asymptotic  regimes {\em a priori} by partial differential equation theories,
\cite{Bathe-book, CiarletIII, Sanchez}.  We can also use numerical results
to detect the asymptotic behavior of a shell  and determine
its asymptotic regime, and thus determine which method is suitable for the given shell
problem.

The procedure is as follows. We use both the two methods to compute a given shell problem. Let $u\e_h$ represent the finite element
solution determined by  the mixed method \eqref{N-fem}, and let
$\bar u\e_h$ represent the solution of the discontinuous Galerkin method \eqref{N-0-fem}.
If $u\e_h$ is significantly bigger than $\bar u\e_h$
in the $\H_h$ norm,
the problem must be bending dominated, and the smallness
of the solution by the discontinuous Galerkin method must be caused by the numerical membrane/shear locking.
In this case, $u\e_h$ is the finite element solution one should use.
If the distinction between the two numerical solutions are not very clear, we do the following computations to amplify the distinction.

In the abstract setting, we have the orthogonal decomposition \eqref{abs-decomp} of the shell model solution that
\begin{equation*}
u\e=u^0+\eps^2 v\e, \quad u^{\eps/2}=u^0+\frac{\eps^2}{4}v^{\eps/2}.
\end{equation*}
From these we have
\begin{equation*}
\frac13(4u^{\eps/2}-u\e)=u^0+\frac{\eps^2}{3}(v^{\eps/2}-v\e).
\end{equation*}
The shell problem is bending dominated when $u^0\ne 0$. Since $v^{\eps/2}-v\e$ is convergent to zero, the three elements $u\e$,  $u^{\eps/2}$, and
$\frac13(4u^{\eps/2}-u\e)$ are successively closer to $u^0$. In the case of membrane/shear dominated or intermediate shell, we have $u^0=0$, and 
the three elements are successively closer to zero.

Similar decomposition is valid for the finite element solution determined by the mixed method \eqref{N-fem}.
%but not for the finite element model
%\eqref{N-0-fem}.
Based on this, we propose to do an additional computation.
We compute  $u^{\eps/2}_h$
that represents the solution of \eqref{N-fem} with
$\eps$ being replaced by $\eps/2$. We then calculate $\frac13(4u^{\eps/2}_h-u\e_h)$.
If $u\e_h$,  $u^{\eps/2}_h$, and $\frac13(4u^{\eps/2}_h-u\e_h)$ are successively closer to a fixed non zero limit, the shell problem is bending dominated,
and $u\e_h$ is the finite element solution one should use.
If the three sets of functions are successively closer to zero, with the last one being virtually equal to zero
in most part of the shell, the shell problem is not bending dominated. In this case, one should
take $\bar u\e_h$ as the finite element solution.

\bibliographystyle{plain}

\begin{thebibliography}{10}
%\bibitem{A-int}
%D.N.~Arnold,
%\newblock{An interior penalty finite element method with discontinuous
%elements},
%\newblock{\em SIAM J. Numer. Anal.}, 19 (1982), pp. 742-760.


\bibitem{ABrezzi2} % SHELL-LOCKING
D.N.~Arnold, F.~Brezzi,
\newblock{Locking free finite element methods for shells},
\newblock{\em Math. Comp.}, 66 (1997), pp. 1-14.


\bibitem{A-DG}
D.N.~Arnold, F.~Brezzi, B.~Cockburn, L.D.~Marini,
\newblock{Unified analysis of discontinuous Galerkin methods for elliptic problems},
\newblock{\em SIAM J. Numer. Anal.}, 39 (2002), pp. 1749-1779.


%\bibitem{AF-RM-DG}
%D.N.~Arnold, F.~Brezzi, R.S.~Falk, L.D.~Marini,
%\newblock{Locking-free Reissner--Mindlin elements without reduced integration},
%\newblock{\em Comput. Methods Appl. Mech. Engrg.},
%196(2007), pp. 3660-3671.




%\bibitem{A-RM-DG}
%D.N.~Arnold, F.~Brezzi, L.D.~Marini,
%\newblock{A family of discontinuous Galerkin finite elements for the Reissner--Mindlin
%plate},
%\newblock{\em J. Scientific Computing}, 22 (2005), pp. 25-45.

%\bibitem{AFW-precondition}
%D.N.~Arnold, R.S.~Falk, R.~Winther,
%\newblock{Preconditioning discrete approximations of the Reissner-Mindlin plate model},
%\newblock{\em RAIRO Modél. Math. Anal. Numér.}, 31(1997), pp. 517-557.












%\bibitem{Lovadina}
%C.~Baiocchi, C.~Lovadina,
%\newblock{Interpolation theory and shell problems},
%\newblock{\em Appl. Math.
%    Letters}, 13(2000), pp. 33-37.



%\bibitem{BIC} %SHELL-LOCKING
%K.J. Bathe, A. Iosilevich, D. Chapelle,
%\newblock{An inf-sup test for shell finite elements},
%\newblock{\em Computers \& Structures}, 75 :5(2000), pp. 439-456.

%\bibitem{Veiga}
%L.~Beir\~ao da Veiga,
%\newblock{Finite element methods for a modified
%Reissner--Mindlin free plate model},
%\newblock{\em SIAM J. Numer. Anal.}, 42(2004), pp. 1572-1591.



\bibitem{BL}  %*************
J.~Bergh, J.~L\"ofstr\"om,
\newblock{Interpolation space: An introduction},
\newblock{\em Springer-Verlag}, 1976.


%\bibitem{Bernadou} %**************
%M.~Bernadou,
%\newblock {Finite element methods for thin shell problems},
%\newblock{\em John Wiley \& Sons - Masson}, 1996.


\bibitem{BCM} %**************
M.~Bernadou, P.G.~Ciarlet, B.~Miara,
\newblock {Existence theorems for two dimensional linear
shell theories},
\newblock{\em J. Elasticity}, 34(1994), pp. 111-138.


\bibitem{Braess}
D. Braess,
Approximation on simplices and orthogonal polynomials,
\newblock{Trends and applications in constructive approximation}, M.G. de Bruin, D.H. Mache, J. Szabados, (eds.),
International Series of Numerical Mathematics, Vol. 151, 53-60,
{\em Birkh\"auser Verlag Basel, Switzerland}, 2005.


\bibitem{Bramble-Sun2} %SHELL-LOCKING
J.H.~Bramble, T. Sun,
\newblock{A locking -free finite element method for Naghdi shells},
\newblock{\em J. Comp. Appl. Math.},
89 (1997), pp.119-133.

%\bibitem{Bramble-Xu}
%J.H.~Bramble, J. Xu,
%\newblock{Some estimates for a weighted $L^2$ projection},
%\newblock{\em Math. Comp},
%56 (1991), pp.463-476.



%\bibitem{Brenner-1}
%S.C.~Brenner,
%\newblock {Poincar\'e--Friedrichs inequalities for piecewise $H^1$ functions},
%\newblock{\em SIAM J. Numer. Anal.}, 41 (2003), pp. 326-324.

\bibitem{Brenner-Korn}
S.C.~Brenner,
\newblock {Korn's inequality for piecewise $H^1$ vector fields},
\newblock{\em Math. Comp.}, 73 (2004), pp. 1067-1087.





\






\bibitem{Brezzi-book}
F.~Brezzi, M.~Fortin,
\newblock{Mixed and hybrid finite element methods},
 {\em Springer-Verlag}, 1991.

\bibitem{Brezzi-Fortin-stable}
F.~Brezzi, M.~Fortin,
\newblock{A minimal stabilisation procedure for mixed finite element methods},
 {\em Numer. Math.}, 89(2001), pp. 457-491.

%\bibitem{Brezzi1} % SHELL-LOCKING
%F. Brezzi,
%\newblock{Towards shell elements avoiding locking in the general case},
%\newblock{in {\em Shells: Mathematical modelling and scientific computing}},
%(M. Bernadou, P.G. Ciarlet and J.M. Viano Eds.),
%Universidade de Santiago de Compostela, 1997, pp. 45-49.





%\bibitem{Brezzi}
%F. Brezzi,
%\newblock{Key challenges in shell discretization},
%\newline
%http://www.msri.org/publications/ln/msri/2000/eshells/brezzi/, 2000.








%\bibitem{Caillerie}
%D. Caillerie,
%\newblock{\'Etude g\'en\'erale d'un type de
%probl\`emes raides et de perturbation singuli\`ere},
%\newblock{\em C.R. Acad. Sci. Paris, S\'erie I},
%t. 323:835 - 840, 1996.



%\bibitem{Celiker-Zhang}
%F. Celiker, L. Fan, S. Zhang, Z. Zhang,
%Locking-free optimal discontinuous Galerkin methods for a Naghdi type arch model,
%{\em J. Sci. Computing}. 52(2012), pp. 49-84.


%\bibitem{Chapelle} %SHELL-SURVEY
%D. Chapelle,
%\newblock{Some new results and current challenges in the
%finite element analysis of shells},
%\newblock{\em Acta Numerica}, 2001, pp. 215-250.


\bibitem{Bathe-book}
D.~Chapelle, K.J.~Bathe,
\newblock{The finite element analysis of shells -- Fundamentals},
{\em Springer}, 2011.



\bibitem{Stenberg}
D.~Chapelle, R. Stenberg,
\newblock{Stabilized finite element formulations for shells in a bending dominated state},
{\em SIAM J. Numer. Anal.}, 36(1998), pp. 32-73.


%\bibitem{Ciarlet-FEM}
%P.G.~Ciarlet,
%\newblock {The finite element method for elliptic problems},
%\newblock{\em North-Holland}, 1978.

\bibitem{CiarletIII}
P.G.~Ciarlet,
\newblock {Mathematical elasticity, Volume III: Theory of shells},
\newblock{\em North-Holland}, 2000.

%\bibitem{CLIII}
%P.G.~Ciarlet, V.~Lods,
%\newblock{Asymptotic analysis of linearly elastic shells. III.
%Justification of Koiter's shell equation},
%\newblock{\em Arch. Rational Mech. Anal.},
%136(1996), pp. 191-200.

%\bibitem{CL} %****
%P.G.~Ciarlet, V.~Lods,
%\newblock{On the ellipticity of linear membrane shell equations},
%\newblock{\em J. Math. Pures Appl.}, 75(1996), pp. 107-124.


%\bibitem{CSP} %*****
%P.G.~Ciarlet, E.~Sanchez-Palencia,
%\newblock{An existence and uniqueness theorem for the two dimensional
%linear membrane shell equations},
%\newblock{\em J. Math. Pures Appl.}, 75(1996), pp. 51-67.




\bibitem{Dahmen}
W. Dahmen, B. Faermann, I.G. Graham, W. Hackbusch, S.A. Sauter,
\newblock{Inverse inequalities in non-quasi-uniform meshes and application to the mortar element method},
\newblock{\em Math. Comp.}, 73(2003), pp. 1107-1138.


%\bibitem{DiPietro}
%D. A. Di Pietro, A. Ern,
%\newblock{Discrete functional analysis tools for discontinuous Galerkin methods with application
%to the incompressible Navier--Stokes equations},
%\newblock{\em Math. Comp.}, 79(2010), pp. 1303-1330.


%\bibitem{E-E}
%W. E, B. Engquist,
%\newblock {The heterogeneous multi-scale methods},
%\newblock{\em Comm. Math. Sci.}, 1(2003), pp. 87-132.

%\bibitem{E-E-H}
%W. E, B. Engquist, Z. Huang,
%\newblock {Heterogeneous multi-scale method --- a general methodology for
%multi-scale modeling},
%\newblock{\em Phys. Rev. B}, 67(2003), 092-110.



%\bibitem{Falk-T}
%R.S.~Falk, T.~Tu
%\newblock {Locking-free finite elements for the Reissner--Mindlin plate},
%\newblock{\em Math. Comp.}, 69(2000), 911-928.

%\bibitem{Feng}
%K. Feng,
%\newblock{On the theory of discontinuous finite elements},
%\newblock{\em Math. Numer. Sinica}, 1(1979), pp. 378-385.





%\bibitem{Veubeke}
%B. Fraeijs de Veubeke,
%Displacement and equilibrium models in the finite element method, in
%{\em Stress analysis}, O.C. Zienkiewicz and G.S. Holister eds.,
%{\em John Wiley \& Sons}, 1965.

%\bibitem{Raviart}
%V. Girault, P-A. Raviart,
%\newblock{Finite element methods for Navier--Stokes equations,  theory and algorithms},
%\newblock{\em Springer-Verlag}, 1986.



\bibitem{GZ} %****
A.E.~Green, W.~Zerna,
\newblock{Theoretical elasticity, second edition},
\newblock{\em Oxford University Press}, 1954.

%\bibitem{Guzey-Cockburn}
%S. Güzey, B. Cockburn, H.K. Stolarski,
%The embedded discontinuous Galerkin method: application to linear shell problems,
%{\em Internat. J. Numer. Methods Engrg.},  70 (2007), pp. 757-790.

%\bibitem{Guzey-Cockburn-1}
%S. Güzey, H.K. Stolarski, B. Cockburn, K.K. Tamma, Design and development of a discontinuous
%Galerkin method for shells,
%{\em Comput. Methods Appl. Mech. Engrg.},  195 (2006), pp. 3528-3548.





%\bibitem{HP2} % SHELL-LOCKING
%V.~Havu, J.~Pitk\"aranta,
%\newblock{Analysis of a bilinear finite element for shallow shells I:
%Approximation of inextensional deformations},
%\newblock{Helsinki University of Technology, Institute of mathematics},
%Reports-A 430, 2000.

%\bibitem{HP3} % SHELL-LOCKING
%V.~Havu, J.~Pitk\"aranta,
%\newblock{Analysis of a bilinear finite element for shallow shells II:
%Consistency error},
%\newblock{Helsinki University of Technology, Institute of mathematics},
%Reports-A 433, 2001.

%\bibitem{Hughes-book}
%T.J.R.~Hughes,
%\newblock{Finite element method -- Linear static and dynamic finite element analysis},
%\newblock{\em Prentice-Hall, Inc.}, 1987.

%\bibitem{Johnson-Pit}
%C.~Johnson, J.~Pitk\"aranta,
%\newblock{An analysis of the discontinuous Galerkin method
%for a scalar hyperbolic equation},
%\newblock{\em Math. Comp.}, 46(1986), pp. 1-26.




%\bibitem{Koiter-I-II-III}
%W.T. Koiter, On the nonlinear theory of thin elastic shells I, II, III,
%{\em Nederl. Akad. Wetensch. Proc. Ser. B},  69(1966), pp.  1-17, 18-32, 33-54.


%\bibitem{Koiter}
%W.T. Koiter,
%\newblock{On the foundations of the linear theory
%of thin elastic shells},
%\newblock{\em Nederl. Akad. Wetensch. Proc. Ser. B},
%73(1970), pp. 169-195.





%\bibitem{Lee-Bathe} %SHELL-LOCKING
%P.S. Lee, K.J. Bathe,
%\newblock{On the asymptotic behavior of shell structures and the evaluation in
%finite element solutions},
%\newblock{\em Computers \& Structures}, 80(2002), pp. 235-255.





%\bibitem{Lesaint}
%P.~Lesaint, P.A.~Raviart,
%\newblock{On a finite element method for solving the neutron
%transport equation},
%In C. de Boor (ed.) {\em
%Mathematical aspects of finite elements in partial differential
%equations}, Academic Press, New York, 1974, pp. 89-123.





\bibitem{Naghdi} %%SHELL-MODEL %****************
P.M.~Naghdi,
\newblock {The theory of shells and plates},
\newblock {\em Handbuch der Physik Vol. VIa/2,
Springer-Verlag, Berlin}, 1972, pp. 425-640.




%\bibitem{Noels}
%L. Noels, A discontinuous Galerkin formulation of non-linear Kirchhoff-Love shells,
%{\em Internat. J. Numer. Methods Engrg.},  78 (2009), pp. 296-323.


%\bibitem{Noels-Radovitzky}
%L. Noels, R. Radovitzky, A new discontinuous Galerkin method for Kirchhoff-Love shells,
%{\em Comput. Methods Appl. Mech. Engrg.},  197 (2008), pp. 2901-2929.


%\bibitem{Pit1}% first SHELL-LOCKING ---hp finite element
%J.~Pitk\"aranta,
%\newblock{The problem of membrane locking in finite element
%analysis of cylindrical shells},
%\newblock{\em Numer. Math.}, 61(1992), pp.  523-542.






%\bibitem{PLOP} %SHELL-LOCKING %****
%J. Pitk\"aranta, Y. Leino, O. Ovaskainen, J. Piila,
%\newblock{Shell deformation states and the finite element method:
%a benchmark study of cylindrical shells},
%\newblock{\em
%Comput. Methods Appl. Mech. Engrg.},
%128(1995), pp. 81-121.



%\bibitem{Pit-Schwab}
%J.~Pitk\"aranta, A.M. Matache, C. Schwab,
%\newblock{Fourier mode analysis of layers in
%shallow shell deformations},
%\newblock{\em Comput. Methods Appl. Mech. Engrg.},
%190(2001), 2943-2975.


\bibitem{Sanchez}
J.~Sanchez-Hubert, E.~Sanchez-Palencia,
\newblock{Coques Elastiques Minces -- Propri\'et\'es Asymptotiques},
\newblock{\em Masson, Paris}, 1997.




%\bibitem{Sanchez-HP}
%J.~Sanchez-Hubert, E.~Sanchez-Palencia,
%\newblock{Anisotropic finite element estimates and local locking for
%shells: parabolic case},
%\newblock{\em C. R. Acad. Sci. Paris, S\'erie IIb}, 329(2001), pp. 153-159.



%\bibitem{Schatz-Wahlbin}
%A.H.~Schatz, L.B.~Wahlbin,
%\newblock{On the finite element method for singularly perturbed
%reaction-diffusion problems in two and one dimensions},
%\newblock{\em Math. Comp.}, 40(1983), pp. 47-89.




%\bibitem{Slicaru} %****
%S.~Slicaru,
%\newblock{On the ellipticity of the middle surface of a shell
%and its application to the asymptotic analysis of membrane shells},
%\newblock{\em J. Elasticity}. 46(1997), pp. 33-42.




\bibitem{Suri}
M. Suri,
\newblock{A reduced constraint $hp$ finite element method for shell problems},
\newblock{\em Math. Comp.} 66(1997), pp. 15-29.


%\bibitem{Sun}
%T. Sun, B. Elec,
%\newblock{Locking-free dual mixed finite elements for thin shells},
%\newblock{\em manuscript}, 2003.


\bibitem{Verfurth}
R. Verf\"urth,
\newblock{A note on polynomial approximation in Sobolev spaces},
\newblock{\em Math. Modelling Numer. Anal.}, 33(1999), pp. 715-719.

%\bibitem{thesis}
%S. Zhang,
%\newblock{A linear shell theory based on variational principles},
%{\em Ph. D. Thesis, The Pennsylvania State University}, 2001.

\bibitem{CR1}
S. Zhang,
\newblock{Equivalence estimates for a class of singular perturbation problems},
\newblock{\em C. R. Acad., Paris, Series I},  342 (2006), pp. 285-288.









%\bibitem{NM}
%Sheng Zhang,
%\newblock{Sharp convergence rate of domain embedding method for various
%boundary conditions}, \newblock{\em Numer. Math.},
%submitted.





%\bibitem{Naghdi-arch}
%S. Zhang,
%\newblock{An asymptotic analysis on the form of Naghdi type arch model},
%\newblock{\em Math. Models Methods Appl. Sci.}, 18(2008), pp. 417-442.

%\bibitem{compact}
%S. Zhang,
%\newblock{Compact embedding in the space of piecewise $H^1$ functions},
%\newblock{arXiv:1302.7079 [math.NA]}.

\bibitem{Korn-shell}
S. Zhang,
\newblock{Discrete Korn's inequality for shells},
\newblock{arXiv:[math.NA]}.


%\bibitem{S-ZZhang}
%Sheng Zhang, Z.~Zhang,
%\newblock{Invalidity of decoupling a biharmonic equation to two Poisson
%equations on non-convex polygons},
%\newblock{\em Inter. J. Numer. Anal. Modelling},
%5 (2008), pp. 73-76.

%\bibitem{ZZhang-1}
%Z.~Zhang,
%\newblock{Arch-beam models: finite element analysis and superconvergence},
%\newblock{\em Numer. Math.},
%61(1992), pp. 117-143.

%\bibitem{ZZhang-2}
%Z.~Zhang,
%\newblock{Locking and robustness in the finite element method
%for circular arch problems},
%\newblock{\em Numer. Math.},
%69(1995), pp. 509-522.



\end{thebibliography}

\end{document}